\documentclass[12pt,a4paper]{article}
\usepackage{amsmath,amsfonts,amssymb,amsthm}
\usepackage{graphicx}
\usepackage[T1]{fontenc}

\setlength\parindent{0pt}

\usepackage{todonotes}
\newcommand{\e}{\varepsilon}
\newcommand{\dd}{\mathrm{d}}

\renewcommand{\div}{{\rm{div}}}

\newcommand{\C}{\mathcal{C}}

\newcommand{\R}{\mathbb{R}}

\newcommand{\Omeps}{\Omega^{\varepsilon}}

\newcommand{\psia}{\psi_{\alpha}}

\newcommand{\Omeet}{\Omega^{\frac{\varepsilon}{2}}}


\newtheorem{theorem}{Theorem}[section]
\newtheorem{lemma}[theorem]{Lemma}
\newtheorem{remark}[theorem]{Remark}

\newtheorem{corollary}[theorem]{Corollary}
\newtheorem{prop}[theorem]{Proposition}
\numberwithin{equation}{section}

\begin{document}
\title{Uniqueness for degenerate parabolic equations in weighted $L^1$ spaces }
\author{Camilla Nobili\thanks{Departement Mathematik, Universit\"at Hamburg, Germany (camilla.nobili@uni-hamburg.de).}\,\, and Fabio Punzo\thanks{Dipartimento di Matematica, Politecnico di Milano, Italia (fabio.punzo@polimi.it).}}


\date{}

\maketitle
  \begin{abstract}
We study uniqueness of solutions to degenerate parabolic problems, posed in bounded domains, where no boundary conditions are imposed. Under suitable assumptions on the operator, uniqueness is obtained for solutions that satisfy an appropriate integral condition; in particular, such condition holds for possibly unbounded solutions belonging to a suitable weighted $L^1$ space. 
  \end{abstract}

\bigskip

\noindent{\it 2010 Mathematics Subject Classification: 35K10, 35K15, 35K65}

\noindent {\bf Keywords:} Degenerate parabolic equations; uniqueness of solutions; weighted Lebesgue spaces.

\section{Introduction}
We investigate uniqueness of solutions to degenerate parabolic problems of the following type
\begin{equation}\label{TD}
 \begin{cases}
  \partial_tu=\div\{a(x,t)\nabla u\}+f & \mbox{ on } \Omega\times (0,T]=:Q_T\\
  u=u_0& \mbox{ on } \Omega \times \{0\}\,,
 \end{cases}
\end{equation}
where $\Omega\subset \R^n$ is an open bounded subset and $T>0$. Note that in \eqref{TD} no boundary conditions have been prescribed.
Concerning the coefficient
$a(x,t)$ and the data $f$ and $u_0$, we always assume that $$a\in C_{x,t}^{1,0}(Q_T), a\geq 0, a\not\equiv 0 \mbox{ in } Q_T\,,$$
$f\in C(Q_T), u_0\in C(\Omega)$. Furthermore, we assume that $\partial \Omega$ is a manifold of dimension $n-1$ of class $C^3$.

\smallskip

A wide literature is devoted to degenerate elliptic and parabolic problems, based on both  analytical methods (see e.g. \cite{Fich}, \cite{FP1}-\cite{FP4}, \cite{MPP}-\cite{PuTe1}, \cite{Taira}) and stochastic calculus (see, e.g.,  \cite{Khas}, \cite{SV}). Under appropriate assumptions on the behaviour at the boundary of the coefficients of the operator,  in \cite{Fich} it is shown that uniqueness of solutions can hold without prescribing boundary conditions at some portion of the boundary. Such solutions belong to $C^2(Q_T)\cap C(\bar Q_T)$, therefore they are bounded.

In \cite{PPT}, \cite{PoPT}, by means of appropriate super-- and subsolutions, similar uniqueness results have been obtained,  also for unbounded solutions.  It is assumed that the solutions satisfy suitable pointwise growth conditions near the boundary. Such conditions are related to the constructed super-- and subsolutions.

In \cite{Pu} uniqueness in the weighted Lebesgue space $L^1_{d^{\beta}(x)}(Q_T)\, (\beta>0)$ is shown for degenerate operators in non-divergence form, under appropriate conditions on the coefficients, similar to those in \cite{Fich}. Here and hereafter,
$$d(x):={\rm dist}(x,\partial\Omega)\quad (x\in \Omega)\,$$
is the function {\it distance from the boundary}.

In \cite{Pu2}, under suitable hypotheses on the coefficient $a$, uniqueness results for problem \eqref{TD}, in suitable weighted $L^2$ spaces, are established, by developing a general idea used, for instance, in \cite{Grig2} and in \cite[Theorem 9.2]{Grig} (see also \cite{IM}) for different purposes. Such uniqueness results are obtained as a consequence of suitable integral maximum principles. Note that integral maximum principles in the whole $\mathbb R^n$  for solutions of degenerate parabolic equations are also obtained in \cite{AB}, \cite{EKT}.

\smallskip

In this paper we generalize the uniqueness results in \cite{Pu2}, since we enlarge the uniqueness class. In fact, we now consider solutions belonging to an appropriate weighted $L^1$ space. The passage from $L^2$ to $L^1$ causes important changes in the proofs. Let us outline the differences between our methods and results, and those in \cite{Pu2}.
The line of arguments in \cite{Pu2} is the following: multiply the differential equation in \eqref{TD} by suitable test functions, integrate by parts {\it one time} and obtain convenient estimates on the solution. To do this, an important step is to find a function $\xi(x,t)$, depending on the distance function $d(x)$, which is Lipschitz continuous w.r.t. to $x$ and $C^1$ w.r.t. to $t$, and satisfies
\begin{equation}\label{eq1af}
\partial_t \xi(x,t) + \alpha\, a(x,t) |\nabla \xi(x,t)|^2\leq 0 \quad \textrm{for a.e. } x\in \Omega, \; \textrm{for any } t\in (0, \bar T),
\end{equation}
for appropriate $\alpha>0, \bar T>0.$

Now, suppose that, for some $ \gamma>1, c_0>\tilde c_0>0, c_1>0,$ for all
$(x,t)\in Q_T$,
\begin{equation}\label{a3}
\tilde c_0 d^{\gamma}(x) \leq a(x,t)\leq c_0 d^{\gamma}(x) \\ \quad \mbox{ and } \\ \quad |\nabla a(x,t)|\leq c_1 d^{\gamma-1}(x)\,.
\end{equation}
For every $\varepsilon>0$ let
\[\Omega^{\varepsilon}:=\{x\in \Omega\,:\, d(x)>\varepsilon\}.\]
In the present paper to obtain uniqueness in a weighted $L^1$ space, we argue as follows: we multiply the differential equation in \eqref{TD} by suitable test functions, then we integrate by parts {\it two times}. Hence to get convenient bounds on the solution, we have to control new terms that appear after the second integration by part. A crucial point in the proof is to exhibit a function $\xi=\xi(x,t)$ with $\xi(\cdot, t)\in C^2(\Omega\setminus\partial\Omeps)\cap C^1(\Omega), \xi(x, \cdot)\in C^1(\Omega)$, which satisfies
\begin{equation}\label{eq2af}
\partial_t\xi+{\rm{div}}\big\{a(x,t)\nabla \xi\big\}+\frac 52 a(x,t)|\nabla\xi|^2 \leq 0  \quad \textrm{in }   [\Omega\setminus \partial\Omeps]\times(T_1, T_2),
\end{equation}
and
\begin{equation}\label{eq3af}
\frac{\partial \xi}{\partial n_{\varepsilon}}=0\quad \text{in }\,\, \partial\Omeps\times(T_1, T_2)\,,
\end{equation}
where $n_\varepsilon$ is the unit outward normal vector to $\Omega^\varepsilon$ at $\partial\Omeps$, for appropriate $0<T_1<T_2.$
Observe that $\xi(x,t)$ is defined in terms of the distance function from the boundary and its behaviour as $x\to\partial\Omega$ is very important, since it influences the integral conditions for the solutions, which guarantees uniqueness.
Clearly, the construction of $\xi$ fulfilling \eqref{eq2af} and \eqref{eq3af} is more delicate than that verifying only \eqref{eq1af}. The choice of $\xi$ changes according to whether  $\gamma>2$ or $\gamma\in [1, 2]$; consequently, in these two cases the proofs present some important differences.

\smallskip

The paper is organized as follows. In Section \ref{result} we state our main two uniqueness results, concerning the two cases $\gamma>2$ and $\gamma\in [1, 2]$; in addition, we compare them with some related results in the literature. The uniqueness result for $\gamma>2$ is proved in Section \ref{sth1}, while the other one, for $\gamma\in [1, 2]$, in Section \ref{sth2}.

\section{Statements of the results }\label{result}

%
%

%

%
%

Consider the homogeneous problem associated to \eqref{TD}, that is
\begin{equation}\label{TD2}
 \begin{cases}
  \partial_tu=\div\{a(x,t)\nabla u\} & \mbox{ in } Q_T\\
  u=0& \mbox{ on } \Omega \times \{0\}\,.
 \end{cases}
\end{equation}

\smallskip

The following two uniqueness results are our main contribute in this paper.

\begin{theorem}\label{th1}
 Suppose that $u\in C^{2,1}(Q_T)\cap C(\Omega\times [0,T])$ solves \eqref{TD2} and $a$ satisfies \eqref{a3} with $\gamma >2$.
 Moreover, suppose that, for some $C>0, \theta>0, \varepsilon_0>0$,
\begin{equation}\label{eq4af}
  \int_0^T\int_{\Omeps}|u(x,t)|\, dx\, dt\leq Ce^{\theta\, \varepsilon^{-\gamma+2}} \qquad \mbox{ for every } \varepsilon\in(0,\varepsilon_0).
\end{equation}
Then $u\equiv 0$ in $Q_T$.

\end{theorem}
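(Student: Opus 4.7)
The plan is to test \eqref{TD2} against $\phi_\delta(u)\,\zeta(x,t)$, where $\phi_\delta$ is a smooth odd nondecreasing approximation of $\sgn$ and $\zeta(x,t):=e^{(5/2)\xi(x,t)}$, with $\xi$ the function asserted by \eqref{eq2af}--\eqref{eq3af}. Integrating over $\Omega\times(0,\bar T)$, performing \emph{two} integrations by parts in the space variables, and letting $\delta\to 0$ (discarding the nonnegative dissipation $\phi_\delta'(u)\,|\nabla u|^2$ in Kato-inequality fashion), one arrives---using $u(\cdot,0)\equiv 0$---at
\[
\int_\Omega \zeta(x,\bar T)\,|u(x,\bar T)|\,dx \;\leq\; \int_0^{\bar T}\!\!\int_\Omega |u|\,\bigl[\partial_t\zeta+\div\{a\nabla\zeta\}\bigr]\,dx\,dt.
\]
The boundary terms on $\partial\Omega$ produced by the two integrations by parts vanish because $a\equiv 0$ there, by \eqref{a3}.

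Next, the direct computation
\[
\partial_t\zeta+\div\{a\nabla\zeta\}=\tfrac{5}{2}\,\zeta\,\bigl[\partial_t\xi+\div\{a\nabla\xi\}+\tfrac{5}{2}\,a\,|\nabla\xi|^2\bigr]
\]
makes the integrand on the right nonpositive thanks to \eqref{eq2af}---this is precisely the origin of the coefficient $5/2$ appearing there. Since $\zeta>0$ on $\Omega$, the preceding display forces $u(\cdot,\bar T)\equiv 0$.

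For this strategy to close the loop, the function $\xi$ has to be produced. In the case $\gamma>2$ I would take $\xi(x,t)=-f(t)\,G_\varepsilon(d(x))$, with $G_\varepsilon(d)$ obtained from $d^{2-\gamma}$ on $\{d\geq\varepsilon\}$ by a linear correction enforcing $G_\varepsilon'(\varepsilon)=0$, extended as the constant $G_\varepsilon(\varepsilon)$ on $\{d<\varepsilon\}$. Condition \eqref{eq3af} then expresses exactly the $C^1$-matching across $\partial\Omeps$: the inner extension automatically has $\partial_{n_\varepsilon}\xi\equiv 0$, and requiring the same from the outer side pins down the correction. Substituting into \eqref{eq2af}, using $|\nabla d|=1$ and \eqref{a3}, the dominant singular term as $d\to 0$ is $\bigl[-f'(t)+\tfrac{5}{2}(\gamma-2)^2 f(t)^2\bigr]\,d^{2-\gamma}$, which is made $\leq 0$ by the choice $f(t)=\bigl[\tfrac{5}{2}(\gamma-2)^2(T_2-t)\bigr]^{-1}$ on a short interval $[T_1,T_2)$; the subdominant bounded contributions (from $\nabla a\cdot\nabla d$, $a\,\Delta d$, and the linear correction) are absorbed by taking $T_2-T_1$ small enough.

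Hypothesis \eqref{eq4af} enters in making the bulk integral absolutely convergent: with the negative ansatz above, $\zeta\sim e^{-(5/2)f(t)d^{2-\gamma}}$ near $\partial\Omega$, compensating the permitted growth $\|u(\cdot,t)\|_{L^1(\Omeps)}\lesssim e^{\theta\varepsilon^{-\gamma+2}}$ provided $f(T_1)>2\theta/5$. Once $u\equiv 0$ has been obtained on the short interval $[T_1,T_2)$, iteration on overlapping short intervals propagates $u\equiv 0$ up to time $T$. The main technical obstacle is the simultaneous construction of $\xi$ meeting \emph{all} four requirements: the pointwise inequality \eqref{eq2af} (against the worst-case scaling $a|\nabla\xi|^2\sim d^{2-\gamma}$), the Neumann-type condition \eqref{eq3af}, the $C^1$-regularity across $\partial\Omeps$, and a sharp exponential decay rate compatible with \eqref{eq4af}. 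The extra $\tfrac{5}{2}\,a\,|\nabla\xi|^2$ term in \eqref{eq2af}---absent from \eqref{eq1af} in \cite{Pu2}---is exactly what forces the second integration by parts and distinguishes the present $L^1$ argument from its $L^2$ predecessor.
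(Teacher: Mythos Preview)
Your proposal has the geometry of $\xi$ inverted relative to what is needed, and this creates an internal inconsistency. You define $G_\varepsilon$ to vary on $\{d\geq\varepsilon\}=\Omega^\varepsilon$ and to be \emph{constant} on the boundary strip $\{d<\varepsilon\}$; but then $\zeta=e^{(5/2)\xi}$ is constant near $\partial\Omega$, contradicting your later assertion that ``$\zeta\sim e^{-(5/2)f(t)d^{2-\gamma}}$ near $\partial\Omega$.'' The paper does the opposite: its auxiliary function vanishes on $\Omega^\varepsilon$ and equals $-\dfrac{\big([d(x)]^{-\beta}-\varepsilon^{-\beta}\big)^2}{2(s-\alpha_1 t)}$ on $\Omega\setminus\Omega^\varepsilon$, with $\beta=\tfrac{\gamma-2}{2}$, so that $e^\xi$ is exponentially small on the annulus $\Omega^{\varepsilon/2}\setminus\Omega^{2\varepsilon/3}$ where the cutoff varies.

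Even with the regions swapped, integrating directly over $\Omega$ and invoking ``$a\equiv 0$ on $\partial\Omega$'' does not close. The solution $u$ is only in $C^{2,1}(Q_T)\cap C(\Omega\times[0,T])$, with no regularity up to $\partial\Omega$; after the first integration by parts over an exhausting sequence $\Omega^\delta$ you pick up a boundary term containing $a\,\zeta\,\partial_n u$ on $\partial\Omega^\delta$, and nothing in \eqref{eq4af} bounds $\nabla u$. The paper avoids this entirely by multiplying not by $e^\xi$ alone but by $\eta^2 e^\xi$, where $\eta$ is a smooth cutoff supported in $\Omega^{\varepsilon/2}$ and equal to $1$ on $\Omega^{2\varepsilon/3}$. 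All integrations then take place on a compact subset where $u$ is smooth, and no boundary terms arise on $\partial\Omega$. The coefficient $\tfrac{5}{2}$ in \eqref{eq2af} does \emph{not} come from the substitution $\zeta=e^{(5/2)\xi}$ you describe, but from applying Young's inequality to the cross term $3a\,\eta\,\nabla\eta\cdot\nabla\xi$; simultaneously one obtains the companion condition $\eta\,\div(a\nabla\eta)+\tfrac{5}{2}a|\nabla\eta|^2\leq C_1\varepsilon^{\gamma-2}$ on the support of $\nabla\eta$. This produces an ``error'' integral over $\Omega^{\varepsilon/2}\setminus\Omega^{2\varepsilon/3}$, which is where \eqref{eq4af} is actually used, combined with the bound $e^{\xi}\leq e^{-\tilde C^2\varepsilon^{-(\gamma-2)}/(4\alpha_1\delta)}$ there. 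The outcome is not $u(\cdot,\bar T)\equiv 0$ directly, but the iterative step
\[
\int_{\Omega^\varepsilon}|u(x,\tau)|\,dx\leq \int_{\Omega^{\varepsilon/2}}|u(x,\tau-\delta)|\,dx+\tilde C\,\varepsilon^{\gamma-2},
\]
valid for all small $\varepsilon$ and all $0<\delta\leq\min\{\tau,c\}$; a separate iteration lemma (halving $\varepsilon$ and stepping back by $\delta$ finitely many times until $\tau-\sum\delta_k=0$) then forces $u\equiv 0$. Your proposal is missing both the cutoff mechanism and this iteration.
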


Obviously, there exist {\it unbounded} functions satisfying condition \eqref{eq4af}. For any $\phi\in C(\Omega), \phi>0$, $p\geq 1$, let
\begin{equation*}
L^p_{\phi}(Q_T):=\left\{u: Q_T \to \mathbb R \textrm{ measurable} : \int_0^T\int_{\Omega} |u(x,t)|^p \phi(x)\, dx dt<\infty \right\}\,.
\end{equation*}
It is direct to see that if  $u\in L^1_{\phi}(Q_T)$ with $\phi(x)=e^{\{-\theta[d(x)]^{2-\gamma}\}}, \theta>0, \gamma>2$, then condition \eqref{eq4af} holds.

\begin{theorem}\label{th2}
  Suppose that $u\in C^{2,1}(Q_T)\cap C(\Omega\times [0,T])$ solves \eqref{TD2} and $a$ satisfies \eqref{a3} with $\gamma \in [1,2]$.
  Moreover, suppose that, for some $C>0, \varepsilon_0>0$ and $\mu>-2\gamma+4$,
  \begin{equation}\label{eq5af}
   \int_0^T\int_{\Omega^{\frac{\varepsilon}2}\setminus\Omega^{\frac 23 \varepsilon}} |u(x,t)|\, [d(x)]^{\gamma-2}\, dx\, dt\leq C\varepsilon^{\mu} \qquad \mbox{ for every } \varepsilon\in(0,\varepsilon_0).
  \end{equation}
  Then $u\equiv 0$ in $Q_T$.
\end{theorem}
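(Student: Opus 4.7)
The strategy is the integral-maximum-principle argument of \cite{Pu2} adapted to $L^1$, in which a \emph{double} (rather than single) integration by parts generates the $\tfrac{5}{2}$ coefficient of \eqref{eq2af}. Fix $\varepsilon\in(0,\varepsilon_0)$ small and let $\eta_\varepsilon \in C^\infty(\Omega)$ be a cut-off with $\eta_\varepsilon \equiv 1$ on $\Omega^{2\varepsilon/3}$, $\eta_\varepsilon \equiv 0$ on $\Omega \setminus \Omega^{\varepsilon/2}$ and $|\nabla \eta_\varepsilon| \leq C/\varepsilon$; thus $\nabla \eta_\varepsilon$ is supported precisely on the annulus appearing in \eqref{eq5af}. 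As test function take $\psi_\varepsilon(x,t) = \eta_\varepsilon(x)^2 \, e^{\xi(x,t)}$, with $\xi$ constructed below. Multiplying \eqref{TD2} by $\sgn(u)\,\psi_\varepsilon$ (mollifying $\sgn$ to invoke Kato's inequality $\sgn(u)\div(a\nabla u) \leq \div(a\nabla|u|)$ in the limit) and integrating by parts twice in $x$, separately on $\Omega^\varepsilon$ and on $\Omega\setminus\overline{\Omega^\varepsilon}$, one uses \eqref{eq3af} together with $\nabla\eta_\varepsilon \equiv 0$ in a neighbourhood of $\partial\Omega^\varepsilon$ to kill the interface integrals. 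A direct expansion of $\partial_t\psi_\varepsilon + \div(a\nabla\psi_\varepsilon)$ produces a principal term $\eta_\varepsilon^2 e^\xi[\partial_t\xi + \div(a\nabla\xi) + a|\nabla\xi|^2]$ plus the cross term $4a\eta_\varepsilon\nabla\eta_\varepsilon\cdot\nabla\xi$; Young's inequality with parameter $\tfrac{3}{4}$ bounds the latter by $\tfrac{3}{2}a\eta_\varepsilon^2|\nabla\xi|^2 + \tfrac{8}{3}a|\nabla\eta_\varepsilon|^2$, upgrading the coefficient of $a|\nabla\xi|^2$ from $1$ to precisely $\tfrac{5}{2}$. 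Using $u(\cdot,0)\equiv 0$, this gives $\int_\Omega |u(T)|\,\psi_\varepsilon(x,T)\,dx \leq R_\varepsilon$, with $R_\varepsilon$ concentrated on the annulus.

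The technical heart is to construct $\xi(x,t)=F(d(x),t)$ satisfying \eqref{eq2af}--\eqref{eq3af}. Since $|\nabla d|=1$ and $\Delta d$ is bounded near $\partial\Omega$, \eqref{a3} reduces \eqref{eq2af} to a one-dimensional inequality
\[
F_t + a(s,t)\,F_{ss} + B(s,t)\,F_s + \tfrac{5}{2}\,a(s,t)\,F_s^2 \;\leq\; 0
\]
for $s=d(x)>0$, $s\neq\varepsilon$, with the $C^1$ matching condition $F_s(\varepsilon,t)=0$ encoding \eqref{eq3af}. I would set $F(s,t) \equiv F(\varepsilon,t)$ on $\{s \leq \varepsilon\}$ (trivially ensuring the matching, and the inequality as soon as $t\mapsto F(\varepsilon,t)$ is non-increasing), and on $\{s \geq \varepsilon\}$ take an explicit ansatz $F(s,t)=A(t)\bigl[g(s) + c(t)\, s\bigr]$, where $g(s)$ is a negative power of $s$ for $\gamma \in [1,2)$ (with its logarithmic analogue at $\gamma=2$) and $c(t)$ is chosen to enforce $F_s(\varepsilon,t)=0$; the amplitude $A(t)$ is then fixed by a Riccati-type ODE making the left-hand side non-positive in the relevant range of $s$.

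To close the argument, on the annulus one has $|\nabla\eta_\varepsilon|^2 + |\Delta\eta_\varepsilon| \lesssim \varepsilon^{-2}$ and $a\lesssim\varepsilon^\gamma$ by \eqref{a3}; since $d\sim\varepsilon$ there, the resulting prefactor $\varepsilon^{\gamma-2}$ can be rewritten as $[d(x)]^{\gamma-2}$, giving
\[
R_\varepsilon \;\leq\; C\,M_\varepsilon \int_0^T\!\!\int_{\Omega^{\varepsilon/2}\setminus\Omega^{2\varepsilon/3}}|u(x,t)|\,[d(x)]^{\gamma-2}\,dx\,dt,
\]
where $M_\varepsilon$ denotes the supremum of $e^\xi$ on the annulus $\times (0,T)$. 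An explicit computation with the ansatz yields $M_\varepsilon \lesssim \varepsilon^{-(4-2\gamma)}$, whereupon \eqref{eq5af} forces $R_\varepsilon \lesssim \varepsilon^{\mu-(4-2\gamma)} \to 0$ by the assumption $\mu>-2\gamma+4$. Passing to $\varepsilon\to 0^+$, combined with a pointwise lower bound for $\psi_\varepsilon$ on compact subsets of $\Omega$ (which the construction guarantees), yields $u(\cdot,T)\equiv 0$ for each $T$, i.e.\ $u\equiv 0$ in $Q_T$. The main expected obstacle is the piecewise construction of $F$ in this regime: the natural weight $d^{2-\gamma}$ now stays bounded up to $\partial\Omega$, forcing negative-power (or logarithmic) corrections in the ansatz, which must be finely tuned to satisfy the Neumann matching at $\partial\Omega^\varepsilon$ while keeping $M_\varepsilon$ no larger than $\varepsilon^{-(4-2\gamma)}$.
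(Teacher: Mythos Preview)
Your setup---double integration by parts against $\eta_\varepsilon^2 e^{\xi}$, Kato-type convexity for $|u|$, a weight $\xi$ satisfying \eqref{eq2af}--\eqref{eq3af}---matches the paper. The gap is in the endgame: you try to integrate over $(0,T)$ in a single shot and attribute the threshold $\mu>4-2\gamma$ to a bound $M_\varepsilon=\sup e^{\xi}\lesssim \varepsilon^{-(4-2\gamma)}$ on the annulus. Neither part works. In the paper $\xi\equiv 0$ on $\Omega^{\varepsilon}$ and $\xi=-\zeta^2/[2(s-\alpha_1 t)]$ on $\Omega\setminus\Omega^{\varepsilon}$ with $\zeta=\varepsilon^{\beta}-d^{\beta}$, $\beta=2-\gamma$ (a \emph{positive} power for $\gamma\in[1,2)$; the negative-power ansatz is the $\gamma>2$ construction), so $e^{\xi}\le 1$ everywhere and no $M_\varepsilon$ factor appears. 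What appears instead is a constraint on the time window: in the expansion of \eqref{eq2af} the term $2(s-\alpha_1 t)\beta[c_1+\ldots]$ must be dominated by $\alpha_1\bar\sigma^2\varepsilon^{4-2\gamma}$, forcing $s-\alpha_1 t\lesssim \varepsilon^{4-2\gamma}$ and hence $\delta\lesssim \varepsilon^{4-2\gamma}$. For $\gamma<2$ this goes to zero with $\varepsilon$, so no $\xi$ with the required matching at $\partial\Omega^\varepsilon$ can satisfy \eqref{eq2af} on all of $(0,T)$; the Riccati-type factor blows up before reaching $T$.

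Accordingly the paper proves only a one-step inequality
\[
\int_{\Omega^{\varepsilon}}|u(x,\tau)|\,dx \;\le\; \int_{\Omega^{\varepsilon/2}}|u(x,\tau-\delta)|\,dx \;+\; \tilde C\,\varepsilon^{\mu},\qquad 0<\delta\le c\,\varepsilon^{4-2\gamma},
\]
and then closes by an iteration lemma you are missing: set $\varepsilon_k=\varepsilon\,k^{-1/(4-2\gamma)}$ and step back from $\tau$ via $\delta_k\le c\,\varepsilon_k^{4-2\gamma}$. Since $\sum_k \varepsilon_k^{4-2\gamma}=\varepsilon^{4-2\gamma}\sum_k 1/k$ diverges, one reaches $\tau_{k_0+1}=0$ in finitely many steps; the accumulated remainder is $\sum_{k\le k_0}\varepsilon_k^{\mu}=\varepsilon^{\mu}\sum_k k^{-\mu/(4-2\gamma)}$, which converges precisely when $\mu>4-2\gamma$. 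This iteration is the actual source of the hypothesis $\mu>-2\gamma+4$, not a pointwise bound on $e^{\xi}$. (A minor additional point: your geometry for $\xi$ is inverted---it should be trivial on $\Omega^{\varepsilon}$ and nontrivial on $\Omega\setminus\Omega^{\varepsilon}$, so that $e^{\xi}=1$ on the interior set where you want the clean lower bound, and $e^{\xi}\le 1$ on the annulus.)
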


Note that, if $u\in L^1_{\phi}(Q_T)$ with
$\phi(x)=[d(x)]^{\gamma-2-\mu}$, then \eqref{eq5af} is valid.

Furthermore, if $\gamma\in \left(\frac 5 3, 2\right]$ and
\begin{equation}\label{eq9af}
|u(x,t)|\leq  \bar C[d(x)]^{-l}\quad \textrm{for every}\;\; x\in \Omega, t\in [0, T]\,,
\end{equation}
for some $\bar C>0$ and $0< l<3\gamma-5$, then \eqref{eq5af} holds with $\mu=\gamma-1-l>-2\gamma+4.$

\begin{remark}{\em
(i) Theorem \ref{th1} generalizes \cite[Theorem 2.1]{Pu2}, where  \eqref{eq4af} is replaced by the stronger condition
\begin{equation}
  \int_0^T\int_{\Omeps}|u(x,t)|^2\, dx\, dt\leq Ce^{\theta\, \varepsilon^{-\gamma+2}} \qquad \mbox{ for every } \varepsilon\in(0,\varepsilon_0).
\end{equation}

\noindent (ii) Theorem \ref{th2} generalizes \cite[Theorem 2.2]{Pu2}, where  \eqref{eq5af} is replaced by the stronger condition
 \begin{equation}
   \int_0^T\int_{\Omega^{\frac{\varepsilon}2}\setminus\Omega^{\frac 23 \varepsilon}} |u(x,t)|^2\, [d(x)]^{\gamma-2}\, dx\, dt\leq C\varepsilon^{\mu} \qquad \mbox{ for every } \varepsilon\in(0,\varepsilon_0).
\end{equation}
for some $\mu>0$. However, note that in Theorem \ref{th2} the further request $\mu>-2\gamma+4$ is made.

\noindent (iii) We should note that in \cite[Theorem 2.1, 22]{Pu2} the hypothesis on the coefficient $a$ is weaker. In fact, instead of \eqref{a3} it is only assumed that
\[a(x,t)\leq c_0 d^\gamma(x)\quad \textrm{for all }\,\, (x, t)\in Q_T\,.\]}
 \end{remark}

\smallskip

 \begin{remark}{\em
 Let $\gamma\in (1, 2]$ and $u$ be a solution of problem \eqref{TD2} satisfying \eqref{eq9af}, for some $\bar C>0$ and $l>0.$
 Observe that \cite[Theorem 2.2]{Pu2} yields that if $0<l<\frac{\gamma-1}2$, then $u\equiv 0$ in $Q_T$.

 Now, let $\gamma \in \left(\frac 5 3,2 \right].$ From Theorem \ref{th2} and the subsequent comments, it follows that $u\equiv 0$, provided that $0<l<3\gamma-5$. Since
 $$3\gamma-5\in (0,1),$$
 while
 $$\frac{\gamma-1}2\in \left(0,\frac 12\right],$$
  the growth condition for $u$ in Theorem \ref{th2} is weaker than that in \cite[Theorem 2.2]{Pu2}. On the other hand, when $\gamma\in \left(0, \frac 53\right)$, \cite[Theorem 2.2]{Pu2} can be applied, whereas the hypotheses of Theorem \ref{th2} are not verified (under the extra condition \eqref{eq9af}).

  Finally, recall that in view of \cite[Proposition 3.3]{Pu2}, if $\gamma=1, l=0$, then uniqueness holds in $L^\infty(Q_T)$. }
  \end{remark}

\medskip

By Theorems \ref{th1} and \ref{th2} the following uniqueness result immediately follows.
\begin{corollary}\label{cor1}
Let $u_1, u_2\in C^{2,1}(Q_T)\cap C(\Omega\times [0, T])$ be two solutions of problem \eqref{TD}.
Assume that \eqref{a3} holds with $\gamma>2$  and both $u_1$ and $u_2$ satisfy condition \eqref{eq4af}, or that \eqref{a3} holds with $\gamma\in [1, 2]$ and both $u_1$ and $u_2$ satisfy condition \eqref{eq5af}. Then $u_1\equiv u_2$ in $Q_T$.
\end{corollary}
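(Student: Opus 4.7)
The plan is the standard linearity-plus-subtraction trick for uniqueness corollaries. Set $u := u_1 - u_2$. Since the equation in \eqref{TD} is linear in $u$ and both $u_1,u_2$ have the same forcing $f$ and the same initial datum $u_0$, the function $u$ satisfies the homogeneous initial condition $u(\cdot,0)\equiv 0$ on $\Omega$ and the equation $\partial_t u = \div\{a(x,t)\nabla u\}$ in $Q_T$; that is, $u$ solves \eqref{TD2}. The regularity $u\in C^{2,1}(Q_T)\cap C(\Omega\times[0,T])$ is inherited from $u_1$ and $u_2$.

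Next I would verify that $u$ satisfies the relevant integral growth condition. By the triangle inequality $|u(x,t)|\le |u_1(x,t)|+|u_2(x,t)|$, so in the case $\gamma>2$, using \eqref{eq4af} for both $u_1$ and $u_2$ with constants $C_1,C_2$ and common $\theta,\varepsilon_0$ (taking the maximum of the $\theta$'s and minimum of the $\varepsilon_0$'s if needed),
\begin{equation*}
\int_0^T\!\!\int_{\Omeps}|u(x,t)|\,dx\,dt \le (C_1+C_2)\,e^{\theta\,\varepsilon^{-\gamma+2}}\qquad\mbox{for every }\varepsilon\in(0,\varepsilon_0),
\end{equation*}
so \eqref{eq4af} holds for $u$ with constant $C=C_1+C_2$. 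Analogously, in the case $\gamma\in[1,2]$ the pointwise bound $|u|\le|u_1|+|u_2|$ and \eqref{eq5af} applied to each $u_i$ yield
\begin{equation*}
\int_0^T\!\!\int_{\Omega^{\varepsilon/2}\setminus\Omega^{2\varepsilon/3}}|u(x,t)|\,[d(x)]^{\gamma-2}\,dx\,dt \le (C_1+C_2)\,\varepsilon^{\mu},
\end{equation*}
so \eqref{eq5af} holds for $u$ with the same $\mu>-2\gamma+4$.

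Finally, I would invoke Theorem \ref{th1} when $\gamma>2$, and Theorem \ref{th2} when $\gamma\in[1,2]$, both of which apply to $u$ by the above. Either theorem delivers $u\equiv 0$ in $Q_T$, i.e.\ $u_1\equiv u_2$.

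There is no serious obstacle here: the corollary is a direct consequence of the linearity of the equation, the subadditivity of $|\cdot|$ under integration, and the two main theorems. The only small care is to check that the integral conditions \eqref{eq4af} and \eqref{eq5af} are preserved under sums, which follows immediately from the triangle inequality since the right-hand sides depend only on $\varepsilon$ and the constants $C,\theta,\mu$ can simply be enlarged/adjusted in the obvious way.
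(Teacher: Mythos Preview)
Your proof is correct and is exactly the argument the paper has in mind: the authors simply write that the corollary ``immediately follows'' from Theorems~\ref{th1} and~\ref{th2}, and your subtraction argument with the triangle inequality is the natural way to make that precise. No changes are needed.
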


\begin{remark}\label{oss1}{\em  Assume that, for some $\varepsilon>0, C_0>0$ and $\gamma\in \mathbb R$,
\begin{equation}\label{e99}
a(x,t)=C_0 [d(x)]^{\gamma} \quad \textrm{for any}\;\; x\in \Omega\setminus\Omega^\varepsilon, t\in [0, T]\,.
\end{equation}

If $\gamma\geq 2$, the results in \cite{Fich} give uniqueness of solutions to problem \eqref{TD2} in $C^2(Q_T)\cap C(\bar Q_T)$. So, in  particular such solutions are bounded. Hence, our results are in agreement with those in \cite{Fich} in the special case of {\em bounded solutions}, if $\gamma>2$. Instead, when $\gamma<2$, the results in \cite{Fich} cannot be applied, since the coefficients are not regular enough.

The results in \cite{PoPT} could be applied, once we construct suitable super-- and subsolutions; however, we would obtain uniqueness under pointwise growth conditions near $\partial \Omega$. Finally, the results in \cite{PPT} and in \cite{Pu} cannot be applied, since our operator does not satisfy the required hypotheses. }
\end{remark}

From the existence result in \cite[Proposition 3.1]{Pu2} and Corollary \ref{cor1} we get the following existence and uniqueness result.

\begin{corollary}\label{cor2}
Let $f\equiv 0, \gamma>2$ and $a>0$ in $Q_T$. Suppose that, for some $0<\beta\leq \gamma-2, \tau>0,$
\begin{equation}\label{eq6af}
0\leq u_0 \leq \exp\left\{\frac{[d(x)]^{\beta}}{\tau}\right\}\quad \textrm{for all}\;\; x\in \Omega\,.
\end{equation}
Assume that \eqref{a3} holds. Then there exists a solution $u\in C^{2,1}(Q_T)\cap C(\Omega\times [0, T])$ of problem \eqref{TD} fulfilling
 \begin{equation}\label{eq7af}
0\leq u(x,t) \leq \hat C\exp\left\{\frac{[d(x)]^{\beta}}{\tau-\lambda t}\right\}\quad \textrm{for all}\;\; x\in \Omega\,, t\in \left[0, T\right],
\end{equation}
with $T=\frac{\tau}{2\lambda}$, for suitable $\lambda>0, \hat C>0$.
Furthermore, $u$ is the unique solution of problem \eqref{TD} in  $L^1_{\phi}(Q_T)$ with $\phi(x)=e^{\{-\frac2{\tau}[d(x)]^{2-\gamma}\}}$.

\end{corollary}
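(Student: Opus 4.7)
The proof is essentially the assembly of two pieces: existence comes from \cite[Proposition 3.1]{Pu2} applied with the initial datum satisfying \eqref{eq6af}, which directly produces a nonnegative solution $u\in C^{2,1}(Q_T)\cap C(\Omega\times[0,T])$ obeying the pointwise bound \eqref{eq7af} on a time interval of the form $[0,\tau/(2\lambda)]$, for suitable structural constants $\lambda,\hat C>0$ depending on $\beta,\tau,\gamma$ and on the constants appearing in \eqref{a3}. So the first part of the statement is a direct citation, and the plan is to focus on the uniqueness claim.

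For uniqueness, the strategy is to apply Corollary \ref{cor1} in the regime $\gamma>2$. Let $u_1, u_2$ be two solutions belonging to $L^1_\phi(Q_T)$ with $\phi(x)=e^{-\frac{2}{\tau}[d(x)]^{2-\gamma}}$; by linearity their difference $v:=u_1-u_2$ solves the homogeneous problem \eqref{TD2} and lies in $L^1_\phi(Q_T)$. What I need to verify is that any $v\in L^1_\phi(Q_T)$ automatically satisfies the integral condition \eqref{eq4af} of Theorem \ref{th1}. This is the only real point to check, and it is short: since $\gamma>2$, on the set $\Omega^\varepsilon=\{d>\varepsilon\}$ one has $[d(x)]^{2-\gamma}\le \varepsilon^{2-\gamma}$, so
\begin{equation*}
\phi(x)\ge \exp\bigl\{-\tfrac{2}{\tau}\varepsilon^{2-\gamma}\bigr\}\qquad\text{for every }x\in\Omega^\varepsilon.
\end{equation*}
Dividing and integrating yields
\begin{equation*}
\int_0^T\!\!\int_{\Omega^\varepsilon}|v(x,t)|\,dx\,dt \;\le\; e^{\frac{2}{\tau}\varepsilon^{2-\gamma}}\int_0^T\!\!\int_\Omega |v(x,t)|\,\phi(x)\,dx\,dt \;=\; C\,e^{\theta\,\varepsilon^{-\gamma+2}},
\end{equation*}
with $\theta=2/\tau$ and $C=\|v\|_{L^1_\phi(Q_T)}$. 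Hence \eqref{eq4af} holds, and Theorem \ref{th1} gives $v\equiv 0$ in $Q_T$, so $u_1\equiv u_2$.

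It remains to verify that the constructed solution $u$ of the existence step does belong to $L^1_\phi(Q_T)$, so that the uniqueness conclusion actually applies to it. This is immediate from the bound \eqref{eq7af}: since $\Omega$ is bounded, $[d(x)]^\beta$ is bounded, and since we work on $t\in[0,\tau/(2\lambda)]$ we have $\tau-\lambda t\ge\tau/2$; consequently $u$ is uniformly bounded on $Q_T$, and $\phi\le 1$, so $u\in L^\infty(Q_T)\subset L^1_\phi(Q_T)$. I do not foresee any genuine obstacle: the corollary is essentially a translation of the abstract uniqueness class $L^1_\phi$ into the exponential growth condition already handled by Theorem \ref{th1}, combined with a black-box existence input from \cite{Pu2}.
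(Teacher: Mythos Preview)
Your proposal is correct and follows exactly the approach indicated by the paper, which simply states that the corollary follows from the existence result in \cite[Proposition 3.1]{Pu2} together with Corollary~\ref{cor1}. You have merely filled in the routine verification (already remarked after Theorem~\ref{th1}) that membership in $L^1_\phi(Q_T)$ with $\phi(x)=e^{-\frac{2}{\tau}[d(x)]^{2-\gamma}}$ implies condition~\eqref{eq4af}, and checked that the constructed solution belongs to this space.
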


Observe that Theorems \ref{th1} and \ref{th2} imply uniqueness whenever \eqref{a3} holds with $\gamma\geq 1$. Such request on $\gamma$ is indeed optimal. In fact, from \cite[Proposition 3.2]{Pu2} it follows that when, for some $\varepsilon>0, \gamma<1, c_2>0, c_3>0, s \in [0, \gamma)$,
\begin{equation}\label{eq8af}
c_2 [d(x)]^{\gamma}\leq a(x)\leq c_3 [d(x)]^{\gamma-s}\quad \textrm{for all}\;\; x\in \Omega\setminus\Omega^{\varepsilon}\,,
\end{equation}
problem \eqref{TD} admits {\it infinitely} many bounded solutions.

\begin{remark}{\em We observe that there are important differences between
problem \eqref{TD} and the companion problem
\begin{equation}\label{e1aa}
\left\{
\begin{array}{ll}
 \,  \partial_t u = a(x,t)\Delta u + f  \, &\textrm{in}\,\,Q_T
\\&\\
\textrm{ }u \, = u_0& \textrm{in}\,\,  \Omega\times \{0\} \,.
\end{array}
\right.
\end{equation}
For example, let
\[a(x,t)=[d(x)]^{\gamma}\quad (\gamma>1)\,.\]
If $\gamma\geq 2$, then there exists a unique bounded solutions to problem \eqref{e1aa} (see \cite[Section 7]{IM}, \cite[Theorem 2.16]{PPT}). On the other hand, if $\gamma<2$, then nonuniqueness of solutions of problem \eqref{e1aa} prevails, in the sense that it is possible to prescribe Dirichlet boundary data at $\partial \Omega\times (0, T]$ (see \cite[Section 7]{IM}, \cite[Theorem 2.18]{PPT}).
Thus, the change between uniqueness and nonuniqueness occurs for $\gamma=2$. Instead, such change for problem \eqref{TD} occurs for $\gamma=1$.}
\end{remark}
\section{Proof of Theorem \ref{th1}}\label{sth1}
Observe that
\begin{equation}\label{grad-d}
|\nabla d(x)|\leq 1\quad \text{for a.e. }\,\, x\in \Omega\,.
\end{equation}
Moreover, (see e.g. \cite{PPT}) if $\partial \Omega$ is of class $C^3$, then there exists $\varepsilon_0\in (0,1)$ such that for each $\varepsilon\in (0, \varepsilon_0)$  $d\in C^2(\Omega\setminus\Omega^\varepsilon),$
and, for some $k_0>0$,
\begin{equation}\label{laplace-d}
|\Delta d(x)|\leq k_0\quad \text{in }\,\,\Omega\setminus\Omega^\varepsilon\,.
\end{equation}
In addition, there exists $\nu_0\in (0,1)$ such that
\begin{equation}\label{lb-gradd}
|\nabla d(x)|\geq \nu_0\quad \text{ for any }\,\, x\in \Omega\setminus\Omega^\varepsilon\,.
\end{equation}

\medskip

For each $\beta>0$, define the function
\begin{equation}\label{zeta}
 \zeta(x,t):=\begin{cases}
           0 & \mbox{ if } x\in \Omega^{\varepsilon}\\
           [d(x)]^{-\beta}-\varepsilon^{-\beta} &\mbox{ if } x\in \Omega\setminus\Omega^{\varepsilon}\,
           \end{cases}\,.
\end{equation}
Differentiating the function above we have
\begin{equation}\label{nabla-zeta}
\nabla \zeta(x,t)=-\beta[d(x)]^{-\beta-1}\nabla d(x)\qquad \mbox{ for any } x\in \Omega\setminus \Omega^{\varepsilon}\,,
\end{equation}
thus
$$|\nabla \zeta(x,t)|^2\leq \beta^2[d(x)]^{-2\beta-2}\qquad \mbox{ for any } x\in \Omega\setminus \Omega^{\varepsilon}\,.$$
Finally define the function
\begin{equation}\label{xi}
 \xi(x,t):=-\frac{\zeta^2(x)}{2(s-\alpha_1t)}
\end{equation}
for any $x\in \Omega, t\neq \frac{s}{\alpha_1}$, where here $\alpha_1>0$ is a parameter to be chosen later.
Note that $\xi(\cdot, t)\in C^2(\Omega\setminus \partial\Omeps)\cap C^1(\Omega)$ and
\begin{equation}\label{14bis}
  \frac{\partial\xi(x,t)}{\partial n_{\varepsilon}}=0 \qquad \mbox{ for any } x\in \partial\Omeps, \; t\neq \frac{s}{\alpha_1}\,,
\end{equation}
where $n_{\varepsilon}$ is the outward normal to $\Omeps$.

Let $\gamma>2$, $c\in \left(0,\frac12\right)$ be such that
\begin{equation}\label{eqagg21}
[(1-c)^{-\frac{\gamma-2}{2}}-1](c_1+c_0k_0)-\beta\nu_0\tilde c_0 <0\,,
\end{equation}
and define
\begin{equation}\label{eqagg22b}
\sigma:=1-(1-c)^{\frac{\gamma-2}{2}}\,.
\end{equation}

\smallskip

The proof of Theorem \ref{th1} is based on the combination of the following results.
\begin{prop}\label{lem1}
  Under assumption \eqref{a3} with $\gamma>2$, suppose $u\in C^{2,1}(Q_T)\cap C(\Omega\times [0,T])$ solves \eqref{TD2}.
  Suppose that, for some $C>0$ and $\theta>0$, \eqref{eq4af} holds.
  Let $\tau\in (0, T), c\in \left(0,\frac 12\right)$ be such that \eqref{eqagg21} is satisfied, $\sigma$ be defined by \eqref{eqagg22b},
  \begin{equation*}
  0<\delta<\min\left\{
 \frac{\sigma^2}{(\gamma-2)(c_1+c_0)},\; \tau,\; \frac{\left[\left(\frac 32 \right)^{\frac{\gamma-2}{2}}-1\right]^2}{4\theta\alpha_1}\right\}\,
\end{equation*}
and
\begin{equation*}
\alpha_1\geq\max\left\{\frac{10 c_0 (\gamma-2)^2}{\sigma^2},\; \frac54 c_0(\gamma-2)^2 \right\}\,.
\end{equation*}

  %

 %
 %

Then
\begin{equation}
\begin{array}{rlll}
\int_{\Omeps}|u(x,\tau)|\, dx
&\leq&\int_{\Omeet}|u(x,\tau-\delta)|\, dx + \tilde{C}\varepsilon^{\gamma-2}\,,
\end{array}
\end{equation}
where $\tilde{C}>0$ is a suitable constant independent of $\varepsilon$.
\end{prop}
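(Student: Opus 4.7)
The plan is to test the PDE against the regularised sign field $H_\eta'(u)\,e^{\xi}\chi^{2}$, where $H_\eta$ is a smooth convex approximation of $|\cdot|$ (so $H_\eta'\to\sgn$, $H_\eta\to|\cdot|$) and $\chi$ is a spatial cutoff with $\chi\equiv 1$ on $\Omega^{2\varepsilon/3}$, $\chi\equiv 0$ outside $\Omega^{\varepsilon/2}$, and $|\nabla\chi|\le C/\varepsilon$. Choosing $s=\alpha_{1}\tau$ in \eqref{xi} is crucial: it makes $e^{\xi(\cdot,\tau)}$ extend by continuity to $\mathbf{1}_{\Omega^{\varepsilon}}$, so evaluating at time $\tau$ automatically isolates $\int_{\Omega^{\varepsilon}}|u(\tau)|\,dx$.

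The core of the argument is to integrate by parts twice. The first integration gives
\[
\int_{\Omega}H_\eta'(u)\,\div(a\nabla u)\,e^{\xi}\chi^{2}\,dx = -\int_{\Omega}H_\eta''(u)|\nabla u|^{2}a\,e^{\xi}\chi^{2}\,dx-\int_{\Omega}a\,\nabla H_\eta(u)\cdot\nabla(e^{\xi}\chi^{2})\,dx,
\]
and the first term is discarded by convexity; a second integration by parts moves the gradient off $u$, producing
\[
-\int_{\Omega}a\,\nabla H_\eta(u)\cdot\nabla(e^{\xi}\chi^{2})\,dx = \int_{\Omega}H_\eta(u)\,\div\bigl(a\,\nabla(e^{\xi}\chi^{2})\bigr)\,dx.
\]
Because $\xi(\cdot,t)$ is only $C^{1}$ across $\partial\Omega^{\varepsilon}$, this last step must be carried out separately on $\Omega^{\varepsilon}$ and on $\Omega\setminus\Omega^{\varepsilon}$; the interface contributions share a factor $\partial\xi/\partial n_{\varepsilon}$ and vanish by \eqref{eq3af}, while the $\partial\Omega$-boundary is killed by $\chi$. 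Sending $H_\eta\to|\cdot|$ and expanding $\div(a\nabla(e^{\xi}\chi^{2}))$, the $\chi^{2}$-piece reassembles into $|u|e^{\xi}\chi^{2}[\partial_{t}\xi+\div(a\nabla\xi)+a|\nabla\xi|^{2}]\le -\tfrac{3}{2}a|u|e^{\xi}\chi^{2}|\nabla\xi|^{2}$ thanks to \eqref{eq2af}. The cross term $4a|u|e^{\xi}\chi\,\nabla\xi\cdot\nabla\chi$ is then dispatched by Young's inequality
\[
4a\chi|\nabla\xi||\nabla\chi|\le \tfrac{3}{2}a\chi^{2}|\nabla\xi|^{2}+\tfrac{8}{3}a|\nabla\chi|^{2},
\]
which consumes exactly the good $-\tfrac{3}{2}$ contribution --- this is why the factor $5/2$ in \eqref{eq2af} is sharp. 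What remains is supported on the annulus $\Omega^{\varepsilon/2}\setminus\Omega^{2\varepsilon/3}$ and yields $\frac{d}{dt}\!\int|u|e^{\xi}\chi^{2}\,dx \le C\!\int|u|e^{\xi}\bigl(a|\nabla\chi|^{2}+\chi|\div(a\nabla\chi)|\bigr)\,dx$.

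Integrating this in $t$ over $[\tau-\delta,t_{2}]$ and sending $t_{2}\uparrow\tau$, the left-hand side becomes $\int_{\Omega^{\varepsilon}}|u(\tau)|\,dx-\int|u(\tau-\delta)|e^{\xi(\tau-\delta)}\chi^{2}\,dx$, where the latter is bounded by $\int_{\Omega^{\varepsilon/2}}|u(\tau-\delta)|\,dx$ since $e^{\xi}\le 1$ and $\chi$ is supported in $\Omega^{\varepsilon/2}$. On the annulus, \eqref{a3} gives $a\le c_{0}(2\varepsilon/3)^{\gamma}$ and, using $|\nabla a|\lesssim d^{\gamma-1}$, $|\div(a\nabla\chi)|\lesssim \varepsilon^{\gamma-2}$, so the geometric factor in the error is $\varepsilon^{\gamma-2}$. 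With $\beta=(\gamma-2)/2$, the bound $\zeta\ge((3/2)^{\beta}-1)\varepsilon^{-\beta}$ on the annulus gives $e^{\xi(x,t)}\le\exp\!\bigl(-((3/2)^{\beta}-1)^{2}\varepsilon^{-(\gamma-2)}/[2\alpha_{1}(\tau-t)]\bigr)$; combined with \eqref{eq4af} used with parameter $\varepsilon/2$, the hypothesis $\delta<((3/2)^{\beta}-1)^{2}/(4\theta\alpha_{1})$ forces the exponential decay from $e^{\xi}$ to beat the exponential growth \eqref{eq4af}, producing an error of size $\tilde{C}\varepsilon^{\gamma-2}$.

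I expect the main obstacle to be the rigorous integration by parts across $\partial\Omega^{\varepsilon}$, where $\xi$ fails to be $C^{2}$: one must split the integral over $\Omega^{\varepsilon}$ and $\Omega\setminus\Omega^{\varepsilon}$, carefully record the jump of $\nabla^{2}\xi$, and verify that the resulting interface integrals cancel via \eqref{eq3af}. A secondary delicate point is the exact balancing of constants --- the factor $5/2$ in \eqref{eq2af} leaves precisely $3/2$ available to absorb the $\nabla\xi\cdot\nabla\chi$ cross term, and the explicit lower bounds on $\alpha_{1}$ together with the upper bound on $\delta$ in the statement are what simultaneously ensure that $\xi$ satisfies \eqref{eq2af}--\eqref{eq3af} and that the growth in \eqref{eq4af} is absorbed.
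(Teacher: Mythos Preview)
Your proposal is correct and follows essentially the same architecture as the paper: test against a smooth convex approximation of $|u|$ multiplied by $\chi^{2}e^{\xi}$, integrate by parts twice (splitting at $\partial\Omega^{\varepsilon}$ and using \eqref{14bis} to kill the interface terms), balance the cross term $4a\chi\,\nabla\xi\cdot\nabla\chi$ via Young against the $\tfrac{5}{2}$--inequality for $\xi$, and then play the exponential decay of $e^{\xi}$ on the annulus against the growth \eqref{eq4af}. The paper uses $\psi_{\alpha}(u)=\sqrt{u^{2}+\alpha}$ in place of your $H_{\eta}$, but the mechanism is identical.

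Two small remarks. First, the paper takes $s=\alpha_{1}(\tau+\delta)$ rather than your $s=\alpha_{1}\tau$; this keeps $\xi$ finite on $[\tau-\delta,\tau]$, so evaluating at $t=\tau$ directly gives $e^{\xi(\cdot,\tau)}=1$ on $\Omega^{\varepsilon}$ and $\le 1$ elsewhere, avoiding your limit $t_{2}\uparrow\tau$ (which is harmless here since $u$ is continuous on the compact set $\overline{\Omega^{\varepsilon/2}}\times[\tau-\delta,\tau]$, but adds a step). Second, you correctly flag that the constraints on $\alpha_{1}$ and $\delta$ are what force \eqref{eq2af}; be aware that in the paper this verification (their ``Claim~1'') is the longest part of the argument and requires splitting $\Omega\setminus\overline{\Omega^{\varepsilon}}$ into a far strip $\{d\le(1-c)\varepsilon\}$, where the bounds on $\alpha_{1},\delta$ suffice, and a near strip $\{(1-c)\varepsilon<d<\varepsilon\}$, where $\zeta$ is small and one must instead invoke the two--sided hypothesis \eqref{a3} together with condition \eqref{eqagg21} on $c$. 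Your sketch does not mention this decomposition, but it is where the lower bound $a\ge\tilde c_{0}d^{\gamma}$ and the hypothesis \eqref{eqagg21} are actually consumed.
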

\begin{lemma}\label{lemma2}
Let $u\in C(\Omega\times [0, T])$ with
\begin{equation}\label{e8}
u=0\quad \textrm{in}\;\; \Omega\times\{0\}\,.
\end{equation}
Suppose that there exist $c>0, \bar \e>0, \mu>0, \hat{C}>0$ such that for any $\e\in (0, \bar \e)$, $\tau\in (0,T)$ and
\begin{equation}\label{e2}
0<\delta\leq \min\{\tau, c\}\,,
\end{equation}
there holds
\begin{equation}\label{e3}
\int_{\Omega^{\e}} |u(x, \tau)|\,dx \leq \int_{\Omega^{\frac{\epsilon}2}} |u(x, \tau-\delta)|\, dx + \hat{C} \e^{\mu}\,.
\end{equation}
Then
\[u\equiv 0\quad \textrm{in}\;\; \Omega\times (0, T]\,.\]
\end{lemma}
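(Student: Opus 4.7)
\textbf{Proof plan for Lemma \ref{lemma2}.}

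The plan is to iterate the inequality \eqref{e3} a fixed number of times, each iteration shrinking the spatial parameter by a factor of $2$ and shifting the time backwards by a uniform step $\delta$, so that after finitely many steps we land at time $0$ where $u$ vanishes by \eqref{e8}. Fix $\tau\in(0,T)$ and choose an integer $N\geq \tau/c$, then set $\delta:=\tau/N$; this choice guarantees $\delta\leq c$ and, crucially, $\delta\leq \tau-(k-1)\delta$ for every $k=1,\dots,N$, so hypothesis \eqref{e2} remains valid at each stage of the iteration. Fix $\varepsilon\in(0,\bar\varepsilon)$ and define $\varepsilon_k:=\varepsilon/2^{k-1}$ and $\tau_k:=\tau-(k-1)\delta$ for $k=1,\dots,N+1$, so that $\tau_1=\tau$, $\tau_{N+1}=0$, and $\varepsilon_k<\bar\varepsilon$ for every $k$.

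Applying \eqref{e3} at the pair $(\varepsilon_k,\tau_k)$ with step $\delta$ yields
\begin{equation*}
\int_{\Omega^{\varepsilon_k}}|u(x,\tau_k)|\,dx \leq \int_{\Omega^{\varepsilon_{k+1}}}|u(x,\tau_{k+1})|\,dx + \hat C\,\varepsilon_k^{\mu}.
\end{equation*}
Summing these inequalities telescopically for $k=1,\dots,N$, and using \eqref{e8} together with the continuity of $u$ up to $t=0$ to discard the term $\int_{\Omega^{\varepsilon_{N+1}}}|u(x,0)|\,dx=0$, we obtain
\begin{equation*}
\int_{\Omega^{\varepsilon}}|u(x,\tau)|\,dx \leq \hat C\sum_{k=0}^{N-1}\bigl(\varepsilon/2^{k}\bigr)^{\mu} \leq \frac{\hat C}{1-2^{-\mu}}\,\varepsilon^{\mu}.
\end{equation*}
Since $\mu>0$, the right-hand side tends to $0$ as $\varepsilon\downarrow 0$. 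On the other hand, because $\Omega^{\varepsilon}\nearrow \Omega$ and $|u(\cdot,\tau)|$ is continuous on $\Omega$, monotone convergence gives $\int_{\Omega^{\varepsilon}}|u(x,\tau)|\,dx \to \int_{\Omega}|u(x,\tau)|\,dx$ as $\varepsilon\downarrow 0$. Hence $\int_{\Omega}|u(x,\tau)|\,dx=0$, and the continuity of $u(\cdot,\tau)$ then forces $u(\cdot,\tau)\equiv 0$ in $\Omega$. As $\tau\in(0,T)$ was arbitrary, we conclude $u\equiv 0$ on $\Omega\times(0,T]$, and the case $t=0$ is covered by \eqref{e8}.

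The only delicate point is organizing the iteration so that the admissibility condition \eqref{e2} is preserved at every step, which is why $\delta$ must be chosen as $\tau/N$ (equally dividing $[0,\tau]$) rather than an ad hoc value; the rest is a geometric series estimate plus monotone convergence, both routine once the book-keeping is in place.
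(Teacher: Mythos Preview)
Your proof is correct and follows essentially the same iteration scheme the paper has in mind: the paper's own proof of this lemma simply refers to \cite[Proposition~4.1]{Pu2} (observing that replacing $u^2$ by $|u|$ changes nothing), and the explicit proof given in the paper for the closely related Lemma~\ref{lemma2bis} is precisely this kind of telescoping argument. Your organization---fixing $N\ge \tau/c$, setting $\delta=\tau/N$, and halving $\varepsilon$ at each step so that the error terms form a convergent geometric series---is the natural specialization of that argument to the case where the admissible step size $\delta$ does not shrink with $\varepsilon$. One cosmetic point: from ``$\tau\in(0,T)$ arbitrary'' you get $u\equiv0$ on $\Omega\times(0,T)$; the endpoint $t=T$ then follows from the assumed continuity of $u$ on $\Omega\times[0,T]$.
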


\medskip

Now we are ready to prove Theorem \ref{th1}.
\begin{proof}[Proof of Theorem \ref{th1}] We obtain the thesis, combining Proposition \ref{lem1} and Lemma \ref{lemma2} with $$\mu=\gamma-2,$$ $$c=\min\left\{
\frac{\sigma^2}{(\gamma-2)(c_1+c_0)},\; \frac{\left[\left(\frac 32 \right)^{\frac{\gamma-2}{2}}-1\right]^2}{4\theta\alpha_1}\right\},$$
and $$\tilde{C}=\hat{C}\,.$$
\end{proof}
\subsection{Proofs of Proposition \ref{lem1} and Lemma \ref{lemma2}}

Consider a family of cut-off functions $\{\eta_{\varepsilon}\}\subset C^{\infty}(\Omega)$ such that
$$0\leq \eta_{\varepsilon}\leq 1\,,$$
and
\begin{equation}\label{eta-0}
 \eta_{\varepsilon}=\begin{cases}
                    1 & \mbox{ in } \Omega^{\frac 23\varepsilon}\\
                    0 & \mbox{ in } \Omega \setminus \Omega^{\frac{\varepsilon}{2}}\,.
                   \end{cases}
\end{equation}
Notice that
\begin{equation}\label{eta-1}
\begin{array}{rlll}
 |\nabla \eta_{\varepsilon}|&\leq& \frac{A_1}{\varepsilon} \quad \mbox{ for every } x\in \Omega\,, \\
 \\
  |\Delta \eta_{\varepsilon}|&\leq& \frac{A_2}{\varepsilon^2} \quad \mbox{ for every } x\in \Omega\,,
\end{array}
\end{equation}
where $A_1$ and $A_2$ are two positive constants.

For every $\alpha>0$, consider a function $\psia:\R\rightarrow \R^{+}$ of class $C^2$ such that
\begin{equation}\label{def:psi''}
\psia''\geq 0 \,.
\end{equation}
Then, by the chain rule
\begin{equation*}\label{sub-solution}
  \div\{a(x,t)\nabla \psia(u)\}=\psia'(u)\div\{a(x,t)\nabla u\}+\psia''(u)a(x,t) |\nabla u|^2
\end{equation*}
and, because of \eqref{def:psi''} and the positivity of $a$, we can estimate the second term on the right-hand side from below, obtaining
\begin{equation}\label{eqagg40}
    \div\{a(x,t)\nabla \psia(u)\}\geq \psia'(u)\div\{a(x,t)\nabla u\}=\partial_t \psia(u)\,,
\end{equation}
where in the last identity we used equation \eqref{TD2}.
%
%
Thus the composed function $\psia(u)$ is a subsolution of \eqref{TD}.
\smallskip

The main ingredient for the proof of Proposition \ref{lem1} is the following

\begin{lemma}\label{pr1}
 Under assumption \eqref{a3} with $\gamma>2$, suppose $u\in C^{2,1}(Q_T)\cap C(\Omega\times [0,T])$ solves \eqref{TD2}.
 Let $0<\varepsilon<\varepsilon_0, \tau\in (0, T), c\in \left(0,\frac 12\right)$ be such that \eqref{eqagg21} is satisfied, $\sigma$ be defined by \eqref{eqagg22b}. If
 \begin{equation}\label{eqagg2}
 0<\delta<\min\left\{
\frac{\sigma^2}{(\gamma-2)(c_1+c_0)}, \tau\right\}
\end{equation}
 and
 \begin{equation}\label{eqagg3}
\alpha_1\geq\max\left\{\frac{10 c_0 (\gamma-2)^2}{\sigma^2}, \frac54 c_0(\gamma-2)^2\right\}\,,
 \end{equation}
 %
 %
 then
\begin{equation}\label{bound-prop1}
\begin{array}{rlll}
  \int_{\Omeet}\psia(u(x,\tau))\eta^2(x)e^{\xi(x,\tau)}\, dx
  &\leq&\int_{\Omeet}\psia(u(x,\tau-\delta))\eta^2(x)e^{\xi(x,\tau-\delta)}\, dx &\\
  \\
  &+& C_1\varepsilon^{ \gamma-2 } \iint_{\Omeet\setminus \Omega^{\frac 23 \varepsilon} \times (\tau-\delta, \tau)}\psia(u(x,t)) e^{\xi(x,t)} \, dx\, dt\,,
\end{array}
\end{equation}
where $\xi$ is defined in \eqref{xi} with $s=\alpha_1(\tau+\delta)$ and $C_1>0$ is a suitable constant independent of $\varepsilon$.
\end{lemma}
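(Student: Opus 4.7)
The plan is to set $F(t):=\int_{\Omeet}\psia(u(x,t))\eta^2(x)e^{\xi(x,t)}\,dx$ and to derive the differential inequality
\[F'(t)\leq C_1\varepsilon^{\gamma-2}\int_{\Omeet\setminus\Omega^{\frac{2\varepsilon}{3}}}\psia(u)\,e^\xi\,dx,\]
after which integration on $[\tau-\delta,\tau]$ produces \eqref{bound-prop1}. Since $\psia$ is $C^2$ and convex and $u$ solves \eqref{TD2}, \eqref{eqagg40} provides the subsolution inequality $\partial_t\psia(u)\leq\div(a\nabla\psia(u))$. Differentiating $F$ and inserting this bound gives
\[F'(t)\leq\int_{\Omeet}\div(a\nabla\psia(u))\,\eta^2 e^\xi\,dx+\int_{\Omeet}\psia(u)\,\eta^2 e^\xi\,\partial_t\xi\,dx.\]

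First I would integrate by parts once: its boundary integral on $\partial\Omeet$ vanishes since $\eta\equiv 0$ there, leaving $-2\int a\eta e^\xi\nabla\psia(u)\cdot\nabla\eta-\int a\eta^2 e^\xi\nabla\psia(u)\cdot\nabla\xi$. Then I would integrate by parts a second time in each of these two pieces, transferring the remaining gradient of $\psia(u)$ onto the weights. The one technicality is that $\xi$ is only piecewise $C^2$ across $\partial\Omega^\varepsilon$; however, since $\zeta$ vanishes on $\partial\Omega^\varepsilon$ the normal derivative $\partial_{n_\varepsilon}\xi$ vanishes there---this is precisely \eqref{14bis}---so after splitting the integrals along $\partial\Omega^\varepsilon$ the interface contributions cancel. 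Collecting derivatives of $\eta$ and $\xi$ yields
\[F'(t)\leq\int\psia(u)\,\eta^2 e^\xi[\partial_t\xi+\div(a\nabla\xi)+a|\nabla\xi|^2]\,dx+4\int a\,\psia(u)\,\eta e^\xi\,\nabla\xi\cdot\nabla\eta\,dx+R(t),\]
where $R(t)$ collects integrals of $a|\nabla\eta|^2$, $a\eta|\Delta\eta|$ and $\eta|\nabla a||\nabla\eta|$, each multiplied by $\psia(u)e^\xi$ and all supported in $\spt(\nabla\eta)\subset\Omeet\setminus\Omega^{\frac{2\varepsilon}{3}}$.

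Next, Young's inequality $4\eta|\nabla\xi\cdot\nabla\eta|\leq\frac{3}{2}\eta^2|\nabla\xi|^2+\frac{8}{3}|\nabla\eta|^2$ absorbs the cross term into the bulk and raises the coefficient of $a|\nabla\xi|^2$ from $1$ to $\frac{5}{2}$---this is exactly what the pointwise inequality \eqref{eq2af} is designed for. Once \eqref{eq2af} is available for the specific $\xi$ from \eqref{xi}, the bulk integral is nonpositive. On the cutoff annulus one has $d(x)\sim\varepsilon$, so \eqref{a3} gives $a\lesssim\varepsilon^\gamma$ and $|\nabla a|\lesssim\varepsilon^{\gamma-1}$, while \eqref{eta-1} gives $|\nabla\eta|\lesssim\varepsilon^{-1}$ and $|\Delta\eta|\lesssim\varepsilon^{-2}$; each term in $R(t)$ is therefore $\lesssim\varepsilon^{\gamma-2}\int_{\Omeet\setminus\Omega^{\frac{2\varepsilon}{3}}}\psia(u)e^\xi\,dx$, and integrating in $t$ closes the argument.

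The main obstacle will be the pointwise verification of \eqref{eq2af} for $\xi=-\zeta^2/(2(s-\alpha_1 t))$ with $s=\alpha_1(\tau+\delta)$. After multiplying through by the positive quantity $(s-\alpha_1 t)^2$ and inserting $\zeta=d^{-\beta}-\varepsilon^{-\beta}$, the inequality reduces to comparing explicit combinations of powers of $d(x)$ and $(s-\alpha_1 t)$, bounded via \eqref{a3}, \eqref{grad-d}, \eqref{laplace-d} and \eqref{lb-gradd}. The structural inequality \eqref{eqagg21}, together with the definition \eqref{eqagg22b} of $\sigma$, is calibrated so that the negative $\tilde c_0\beta\nu_0$-contribution from $a|\nabla\zeta|^2$ dominates the positive lower-order pieces produced by $\nabla a\cdot\nabla\zeta$ and $a\Delta\zeta$; the quantitative bounds \eqref{eqagg2}--\eqref{eqagg3} then guarantee that the damping term $-\alpha_1\zeta^2/2$ coming from $\partial_t\xi$ controls the residual $\frac{5}{2}a\zeta^2|\nabla\zeta|^2$ produced by the quadratic gradient contribution, thereby securing \eqref{eq2af} and completing the scheme.
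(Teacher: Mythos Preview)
Your proposal is correct and follows the paper's approach essentially step for step: test with $\eta^2 e^\xi$, integrate by parts twice (using \eqref{14bis} and \eqref{eta-0} to kill interface and boundary terms), absorb the cross term by Young to reach the $\tfrac52$ coefficient, and then verify the two pointwise conditions (E1) and (E2). Your treatment of (E2) on the cutoff annulus matches the paper exactly.

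The one point worth making explicit, which the paper does and you leave implicit, is that the verification of (E1) proceeds by splitting $\Omega\setminus\overline{\Omega^\varepsilon}$ into two zones along $\{d=\varepsilon-\varepsilon_1\}$ with $\varepsilon_1=c\varepsilon$. In the inner zone $\{d\le(1-c)\varepsilon\}$ one has the lower bound $\zeta\ge\sigma d^{-\beta}$, so the damping $-\alpha_1\zeta^2$ controls \emph{both} the quadratic term $5a\zeta^2|\nabla\zeta|^2$ (via \eqref{eqagg3}) and the $(s-\alpha_1 t)$--linear lower-order terms (via the $\delta$--bound \eqref{eqagg2}). In the outer collar $\{(1-c)\varepsilon<d<\varepsilon\}$, where $\zeta$ is small and the damping is useless, it is instead the strictly negative zero-order piece $-\beta a\,d^{-2\beta-2}|\nabla d|^2$ (bounded below using $\tilde c_0$ and $\nu_0$) that absorbs the $\nabla a\cdot\nabla\zeta$ and $a\Delta\zeta$ contributions, and this is exactly what \eqref{eqagg21} encodes. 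Your final paragraph assigns the right roles to \eqref{eqagg21} and to \eqref{eqagg2}--\eqref{eqagg3}, but phrases it as if the two mechanisms act globally in tandem; in the actual argument they act on disjoint regions, and recognising this split is what makes the estimates close cleanly.
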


\begin{proof}[Proof of Lemma \ref{pr1}]
Define the set $\C=\Omega^{\frac{\varepsilon}{2}}\times(\tau-\delta,\tau)$. Testing the time derivative of $\psia$ with $\eta^2e^{\xi(x,t)}$ we get
\begin{eqnarray}\label{0-put-here}
\int_{\Omeet}\psia(u(x,\tau))\eta(x)^2e^{\xi(x,\tau)}\, dx
&=&\int_{\Omeet}\psia(u(x,\tau-\delta))\eta^2(x)e^{\xi(x,\tau-\delta)}\, dx\notag\\
&&+\iint_{\C} \partial_t[\psia(u(x,t))]\eta^2(x)e^{\xi(x,t)}\, dx\, dt\notag\\
&&+ \iint_{\C} \psia(u(x,t))\eta^2(x)\partial_te^{\xi(x,t)}\, dx\, dt\,.\notag\\
\end{eqnarray}
We can compute the second term of the right-hand-side of the above equation as
\begin{eqnarray}\label{1-put-here}
 \iint_{\C}\partial_t[\psia(u)]\eta^2e^{\xi}\, dx\, dt\notag
 &=&\iint_{\C}\psia'(u)\partial_tu\,\eta^2e^{\xi}\, dx\, dt\notag\\
 &=&\iint \psia'(u)\div\{a\nabla u\}\eta^2e^{\xi}\, dx\, dt\notag\\
&\stackrel{\eqref{sub-solution}}{=}&\iint_{\C} \left(\div\{a\nabla \psia(u)\}-\psia'' a |\nabla u|^2\right)\eta^2e^{\xi}\, dx\, dt\notag\\
&\leq&\iint_{\C} \div\{a\nabla \psia(u)\}\eta^2e^{\xi}\, dx\,dt\,,\notag\\
\end{eqnarray}
where, in the last inequality we used the positivity of the integrating factor, due to \eqref{def:psi''} and \eqref{a3}.
%
%
We need to estimate the right-hand side of the above inequality further: Integrating by parts a second time we have
%
\begin{eqnarray*}
 &&\iint_{\C}\div\{a\,\nabla \psia(u)\}\eta^2e^{\xi}\, dx\,dt\\
 &&=
 -\iint_{\C}a\,\nabla \psia(u)\cdot \nabla (\eta^2e^{\xi})\, dx\, dt\\
 &&=-\iint_{\C}a\,\nabla \psia(u)\cdot 2\eta \nabla\eta\, e^{\xi}\, dx\, dt
 - \iint_{\C}a\,\nabla \psia(u)\cdot \eta^2 \nabla\xi\, e^{\xi}\, dx\, dt\,.
\end{eqnarray*}
Observe that, since $\xi\in C^2(\Omega\setminus \partial\Omeps)$, the last identity is justified by splitting the integral in theset $\Omeet\setminus \Omeps$ and $\Omeps$, integrating by parts and eliminating the boundary terms thanks to \eqref{14bis} and \eqref{eta-0}:

\begin{eqnarray*}
  &&-\iint_{\C}a\,\nabla \psia(u)\cdot 2\eta \nabla\eta\, e^{\xi}\, dx\, dt
  - \iint_{\C}a\,\nabla \psia(u)\cdot \eta^2 \nabla\xi\, e^{\xi}\, dx\, dt\\
&&=-\iint_{\Omeps}a\,\nabla \psia(u)\cdot 2\eta \nabla\eta\, e^{\xi}\, dx\, dt
- \iint_{\Omeps\times (\tau-\delta,\tau)}a\,\nabla \psia(u)\cdot \eta^2 \nabla\xi \,e^{\xi}\, dx\, dt\\
&&  \qquad-\iint_{\Omega^{\frac{\varepsilon}{2}}\setminus \Omega^{\varepsilon}\times (\tau-\delta,\tau)}a\,\nabla \psia(u)\cdot 2\eta \nabla\eta \,e^{\xi}\, dx\, dt
- \iint_{\Omega^{\frac{\varepsilon}{2}}\setminus \Omega^{\varepsilon}}a\,\nabla \psia(u)\cdot \eta^2 \nabla\xi \,e^{\xi}\, dx\, dt\\
&&=\iint_{\Omeps\times (\tau-\delta,\tau)}\div(a\,  2\eta \nabla\eta \,e^{\xi}) \psia(u)\, dx\, dt
-\int_{\partial\Omeps}a\,  2\eta \frac{\partial\eta}{\partial n_{\varepsilon}} e^{\xi} \psia(u) \, dS_x\, dt\\
&& \qquad+ \iint_{\Omeps\times (\tau-\delta,\tau)}\div(a\, \eta^2 \nabla\xi \,e^{\xi})\psia(u)\, dx\, dt
- \int_{\partial\Omeps}a\, \eta^2 \frac{\partial\xi}{\partial n_{\varepsilon}} e^{\xi}\psia(u)\, dS_x\, dt\\
&&\qquad+\iint_{\Omega^{\frac{\varepsilon}{2}}\setminus \Omeps\times (\tau-\delta,\tau)}\div(a\,  2\eta \nabla\eta \,e^{\xi}) \psia(u)\, dx\, dt\\
&&\qquad-\int_{\partial\Omega^{\frac{\varepsilon}{2}}\cup \partial\Omeps \times (\tau-\delta,\tau) }a\,  2\eta \frac{\partial\eta}{\partial n_{\varepsilon}} e^{\xi} \psia(u) \, dS_x\, dt\\
&&\qquad+ \iint_{\Omega^{\frac{\varepsilon}{2}}\setminus \Omeps \times (\tau-\delta,\tau)}\div(a\, \eta^2 \nabla\xi \,e^{\xi})\psia(u)\, dx\, dt \\
&&\qquad-\int_{\partial\Omega^{\frac{\varepsilon}{2}}\cup \partial\Omeps \times (\tau-\delta,\tau)}a\, \eta^2 \frac{\partial\xi}{\partial n_{\varepsilon}} e^{\xi}\psia(u)\, dS_x\, dt\\
&&\stackrel{\eqref{14bis}\&\eqref{eta-0} }{=}\iint_{\Omeps\times (\tau-\delta,\tau)}\div(a\,  2\eta \nabla\eta \,e^{\xi}) \psia(u)\, dx\, dt\\
&&\qquad+ \iint_{\Omeps\times (\tau-\delta,\tau)}\div(a\, \eta^2 \nabla\xi \,e^{\xi})\psia(u)\, dx\, dt\\
&&\qquad+\iint_{\Omega^{\frac{\varepsilon}{2}}\setminus \Omeps\times (\tau-\delta,\tau)}\div(a\,  2\eta \nabla\eta \,e^{\xi}) \psia(u)\, dx\,dt\\
&&\qquad+ \iint_{\Omega^{\frac{\varepsilon}{2}}\setminus \Omeps\times (\tau-\delta,\tau)}\div(a\, \eta^2 \nabla\xi \,e^{\xi})\psia(u)\, dx\, dt\,.
\end{eqnarray*}

We therefore obtained

\begin{eqnarray*}
 &&\iint_{\C}\div\{a\,\nabla \psia(u)\}\eta^2e^{\xi}\, dx\,dt\\
&&= -2\iint_{\C}\{\eta \,e^{\xi}{\rm{div}} (a\,\nabla \eta)+|\nabla \eta|^2a\,e^{\xi}+\eta e^{\xi}a\,\nabla\xi\nabla \eta\}\psia(u)\, dx\, dt\\
&&\qquad-\iint_{\C}\{2\eta\nabla \eta \,e^{\xi}a\,\nabla \xi+\eta^2 e^{\xi}|\nabla \xi|^2 a\,+\eta^2 \,e^{\xi}{\rm{div}} (a\,\nabla \xi)\}\psia(u)\, dx\, dt\,.
\end{eqnarray*}
and, inserting this new expression in \eqref{1-put-here} and this last one back into inequality \eqref{0-put-here}, we get
\begin{equation*}
\begin{array}{rlll}
&&\int_{\Omeet}\psia(u(x,\tau))\eta^2(x)e^{\xi(x,\tau)}\, dx\\
\\
&&\leq\int_{\Omeet}\psia(u(x,\tau-\delta))\eta^2(x)e^{\xi(x,\tau-\delta)}\, dx &\\
\\
&& \qquad-2\iint_{\C}\left\{\eta e^{\xi}{\rm{div}} (a(x,t)\nabla \eta(x))+|\nabla \eta(x)|^2a(x,t)e^{\xi(x,t)}\right.\\
\\
&&\qquad\qquad\left.+\eta(x) e^{\xi(x,t)}a(x,t)\nabla\xi(x,t)\nabla \eta(x)+2\eta(x)\nabla \eta(x) e^{\xi(x,t)}a(x,t)\nabla \xi(x,t)\right.\\
\\
&&\qquad\qquad\left.+\eta^2(x) e^{\xi(x,t)}|\nabla \xi(x,t)|^2 a(x,t)+\eta^2(x) e^{\xi(x,t)}{\rm{div}} (a(x,t)\nabla \xi(x,t))\right\}\\
\\
&&\hspace{11cm} \times\psia(u(x,t))\, dx\, dt \quad \\
\\
&&\qquad+ \iint_{\C} \psia(u(x,t))\eta^2(x)\partial_te^{\xi(x,t)}\, dx\, dt  \,.
\end{array}
\end{equation*}
Using Young's inequality in the form
 \[
 3\iint_{\C}\psia \eta\nabla\eta \nabla \xi a\, e^{\xi}\leq\frac 32\iint_{\C}\psia|\nabla \xi|^2\eta^2 a\,e^{\xi}\, dx\, dt+\frac 32 \iint_{\C}\psia |\nabla \eta|^2a\,e^{\xi}\, dx\, dt\,
\]
in the second integral of the right-hand side, we get
\begin{eqnarray*}
 &&-2\iint_{\C}\{\eta e^{\xi}{\rm{div}} (a\,\nabla \eta)+|\nabla \eta|^2a\,e^{\xi}+\eta e^{\xi}a\,\nabla\xi\nabla \eta\\
\\
&&\qquad +2\eta\nabla \eta e^{\xi}a\,\nabla \xi+\eta^2 e^{\xi}|\nabla \xi|^2 a\,+\eta^2 e^{\xi}{\rm{div}} (a\,\nabla \xi)\}\psia(u)\, dx\, dt \\
\\
&&\leq\iint_{\C}\psia(u) e^{\xi} [\eta{\rm{div}} (a\,\nabla\eta)+|\nabla \eta|^2a]\, dx\, dt\\
\\
&&\qquad+\iint_{\C}\psia(u) e^{\xi}[\eta^2|\nabla \xi|^2a+\eta^2{\rm{div}} (a\,\nabla \xi)]\, dx\, dt\\
&&\qquad+\frac 32\iint_{\C}\psia(u)\,e^{\xi}|\nabla \xi|^2\eta^2a\,dx\, dt+\frac 32\iint_{\C}\psia(u)\,e^{\xi}|\nabla \eta|^2a\, dx\, dt.
\end{eqnarray*}

Putting all the previous estimates together we have
\begin{equation*}
\begin{array}{rlll}
&&\int_{\Omeet}\psia(u(x,\tau))\eta^2(x)e^{\xi(x,\tau)}\, dx\\
\\
&&\quad\leq\int_{\Omeet}\psia(u(x,\tau-\delta))\eta^2(x)e^{\xi(x,\tau-\delta)}\, dx &\\
\\
&&\quad+\iint_{\C}\psia(u(x,t)) e^{\xi(x,t)} [\eta{\rm{div}} (a(x,t)\nabla\eta(x))+|\nabla \eta(x)|^2a(x,t)]\, dx\, dt\\
\\
&&\quad+\int_{\C}\psia(u(x,t)) e^{\xi(x,t)}[\eta^2(x)|\nabla \xi(x,t)|^2a(x,t)+\eta^2{\rm{div}} (a(x,t)\nabla \xi(x,t))]\, dx\, dt\\
\\
&&\quad+\frac 32\iint_{\C}\psia(u(x,t))|\nabla \xi(x,t)|^2\eta^2(x)a(x,t)e^{\xi(x,t)}\, dx\, dt\\
\\
&&\quad+\frac 32\iint_{\C}\psia(u(x,t))|\nabla \eta(x)|^2a(x,t)e^{\xi(x,t)}\, dx\, dt\\
\\
&&\quad+ \iint \psia(u(x,t))\eta^2(x)e^{\xi(x,t)}\partial_t\xi(x,t) \, dx\, dt .\\
\\
\end{array}
\end{equation*}
Finally, summing up and rearranging the terms we have
\begin{equation}\label{eq:4}
\begin{array}{rlll}
&&\int_{\Omeet}\psia(u(x,\tau))\eta^2(x)e^{\xi(x,\tau)}\, dx\\
\\
&&\leq\int_{\Omeet}\psia(u(x,\tau-\delta))\eta^2(x)e^{\xi(x,\tau-\delta)}\, dx &\\
\\
&&\quad+ \iint_{\C}\psia(u(x,t)) e^{\xi(x,t)} [\eta{\rm{div}} (a(x,t)\nabla\eta(x))+\frac 52|\nabla \eta(x)|^2a(x,t)]\, dx\, dt\\
\\
&&\quad+\iint_{\C}\psia(u) e^{\xi(x,t)}\eta^2[\partial_t\xi(x,t)+\frac 52|\nabla \xi(x,t)|^2a(x,t)+{\rm{div}} (a(x,t)\nabla \xi(x,t))]\, dx\, dt\,.
\end{array}
\end{equation}
Our next goal is to show that
\begin{align*}
&\partial_t\xi+\frac 52 a\,|\nabla\xi|^2+{\rm{div}}(a\,\nabla \xi)\leq 0 \qquad \mbox{ in } [\Omega\setminus\partial\Omeps]\times (\tau-\delta,\tau)\,;\tag{E1}\\
\\
 &\eta\, {\rm{div}} (a\,\nabla\eta)+\frac 52|\nabla \eta|^2a\leq C_1\varepsilon^{\gamma-2} \qquad \mbox{ in }  \Omega\times (\tau-\delta,\tau)\,, \tag{E2}
 \end{align*}
for some $C_1>0$ independent of $\varepsilon.$

%
%
\smallskip

{\textbf{Claim 1}}: Condition (E1) holds.

{\it Proof of Claim 1.}
 We start recalling that, by definition of $\zeta$, the function $\xi$ (and all its derivatives in time and space) are supported in $\Omega\setminus\Omeps=\{x\in \Omega\;|\; d(x)\leq \varepsilon\}$ so (E1) is trivially verified in $\Omeps$. Now, consider any $x\in \Omega\setminus\overline{\Omega^\varepsilon}$  and any  $t\in (\tau-\delta, \tau).$
In view of the definition of $\xi$ \eqref{xi} we compute
\begin{equation}
 \partial_t\xi =-\frac{\alpha_1\zeta^2}{2(s-\alpha_1t)^2},\qquad |\nabla\xi|^2=\frac{\zeta^2|\nabla \zeta|^2}{(s-\alpha_1t)^2}\\
\end{equation}
and
\begin{equation}\label{last-term}
{\rm{div}}(a(x,t)\nabla \xi)=-\nabla a\cdot \frac{\zeta\nabla \zeta}{s-\alpha_1t} -a(x,t)\frac{|\nabla\zeta|^2}{s-\alpha_1 t}-a(x,t)\frac{\zeta\Delta\zeta}{s-\alpha_1 t}\,.
\end{equation}
%
%
%
Rewriting the right-hand side of \eqref{last-term} by inserting the definition of $\nabla \zeta$ and $\Delta \zeta$, we have
\begin{equation}
  \begin{array}{rlll}
    \nabla a\cdot\zeta\nabla \zeta&=&-\beta\,\zeta\, \nabla a\cdot \nabla d \,d^{-\beta-1} \\
    \\
    a\zeta\Delta \zeta&=&\beta(\beta+1)\,a\,\zeta\, d^{-\beta-2}\,|\nabla d|^2-\beta \,a\,\zeta\, d^{-\beta-1}\,\Delta d\\
    \\
    a|\nabla \zeta|^2&=&\beta^2\,a\, d^{-2\beta-2}\,|\nabla d|^2\,.
  \end{array}
\end{equation}
Putting all previous terms together we obtain the expression
%
%
%
\begin{equation}\label{estimate:1}
\begin{array}{rlll}
 &&\partial_t\xi+\frac 52 a(x,t)|\nabla\xi|^2+{\rm{div}}(a\nabla \xi)\\
 \\
 &&=\frac{1}{2(s-\alpha_1 t)^2}\left\{- \alpha_1\zeta^2+5 a\zeta^2\beta^2\,d^{-2(\beta+1)}\,|\nabla d|^2\right.\\
 \\
 &&\qquad+\left.2(s-\alpha_1t)\beta\left[\zeta\,\nabla a\cdot\nabla d  d^{-\beta-1}
 +a\, \zeta\, \Delta d \,d^{-\beta-1}\right.\right.\\
 \\
 &&\qquad\left.\left.-\beta \,a \,d^{-2\beta-2}\,|\nabla d|^2
 -(\beta+1)\, a\, \zeta\,d^{-\beta-2}\,|\nabla d|^2
 \right]\right\}\,.
 \end{array}
 \end{equation}
 %
%
In order to estimate the right-hand side of the expression above, we decompose the set $\Omega\setminus\overline{\Omega^{\varepsilon}}$ as
$$\Omega\setminus\overline{\Omega^{\varepsilon}}=(\Omega\setminus \Omega^{\varepsilon-\varepsilon_1})\cup (\Omega^{\varepsilon-\varepsilon_1}\setminus \overline{\Omega^{\varepsilon}})\,.$$
\begin{figure}
  \centering
 \includegraphics[scale=0.4]{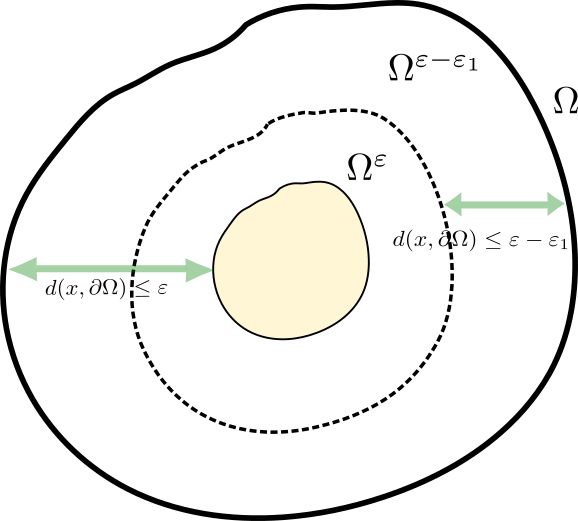}
\caption{Illustation of the decomposition of the set $\Omega$.}
\end{figure}
Consider first the region
 $$\Omega\setminus \Omega^{\varepsilon-\varepsilon_1}=\{x\in \Omega\; |\; d(x)\leq \varepsilon-\varepsilon_1\} \quad \mbox{ with } \quad \varepsilon_1\in (0,\varepsilon/2)\,.$$
 %

 Thanks to \eqref{lb-gradd}, the last two terms of the right-hand side can be bounded from above by zero, i.e.
 $$-\beta\,a\,d^{-2\beta-2}\,|\nabla d|^2\leq 0$$
 and
 $$-(\beta+1) a\,d^{-\beta-2}\,|\nabla d|^2 \zeta\leq 0\,.$$
 Using \eqref{laplace-d} and \eqref{a3}
 and $\zeta(x)=d^{-\beta}-\varepsilon^{-\beta}\leq d^{-\beta}$ we can estimate
 $$a\Delta d\,d^{-\beta-1}\,\zeta\leq c_0 k_0d^{-2\beta-1+\gamma}\,,$$
 while using \eqref{grad-d} and \eqref{a3}
 we have
 $$\nabla a\cdot \nabla d\,d^{-\beta-1}\,\zeta\leq c_1 d^{-2\beta-2+\gamma}\,.$$
 Finally
 $$5a\,\zeta^2\,\beta^2\,d^{-2(\beta+1)}\,|\nabla d|^2\leq10 \beta^2 c_0\,d^{-4\beta-2+\gamma}\,,$$
 where we used
 \begin{equation}\label{ub1}
 \zeta^2=d^{-2\beta}+\varepsilon^{-2\beta}-2d^{-\beta}\varepsilon^{-\beta}\leq 2d^{-2\beta}\,.
 \end{equation}
 We claim that there exists a $\sigma\in (0,1)$ such that
 \begin{equation}\label{ub2}
 -\zeta^2(x)\leq - \sigma^2 d^{-2\beta}(x)\,,
 \end{equation}
 and this will follow if we can show
 $$d^{-\beta}(x)-\varepsilon^{-\beta}(x)\geq \sigma d^{-\beta}(x)\,.$$
 The letter is equivalent to
 $$\sigma \leq1-\left(\frac{d}{\varepsilon}\right)^{\beta}\,,$$
 which is clearly fulfilled by choosing
 $$\sigma=1-\left(\frac{\varepsilon-\varepsilon_1}{\varepsilon}\right)^{\beta}\,.$$
 Let $\varepsilon_1=c\varepsilon$ with $c\in \left(0,\frac 12\right)$. Hence
 \begin{equation*}\label{eqagg1}
 \sigma=1-(1-c)^{\beta}.
 \end{equation*}
  Now we can use \eqref{ub2} and \eqref{ub1} to estimate the right-hand side of \eqref{estimate:1} further:
 \begin{eqnarray*}
  &&\partial_t\xi+\frac 52 a\,|\nabla\xi|^2+{\rm{div}}(a\,\nabla \xi)\\
  &&\leq\frac{1}{2(s-\alpha_1 t)^2}
  \Big\{-\alpha_1 \sigma^2 d^{-2\beta} +10 \beta^2\,c_0\,d^{-4\beta-2+\gamma}\\ &&\qquad+ (s-\alpha_1t)\beta\left[c_1 d^{-2\beta-2+\gamma}+c_0d^{-2\beta-1+\gamma}
  \right]\Big\}\\
  \\
  &&\leq \frac{d^{-4\beta-2+\gamma}}{2(s-\alpha_1 t)^2}\left\{-\alpha_1\sigma^2 d^{2\beta+2-\gamma}+10\beta^2c_0+(s-\alpha_1t)\beta\left[c_1 d^{2\beta}+c_0d^{2\beta+1}\right]\right\}\,.
  \end{eqnarray*}
Choose
\begin{equation}\label{condition-beta}
\beta=\frac{\gamma-2}{2}\,,
\end{equation}
\begin{equation}\label{eqagg35}
s=\alpha_1(\tau +\delta)\,.
\end{equation}
So, for all $t\in (\tau-\delta, \tau),$
\begin{equation}\label{eqagg20}
\alpha_1\delta<s-\alpha_1t<2\alpha_1\delta\,.
\end{equation}
%
This together with the fact that
$$d(x)\leq\varepsilon \quad \mbox{ in } \Omega \setminus \Omega^{\varepsilon}$$
yields
\begin{equation}\label{estimate:2}
\begin{array}{rlll}
  &&\partial_t\xi+\frac 52 a\,|\nabla\xi|^2+{\rm{div}}(a\,\nabla \xi)\\
  \\
  &&\leq\frac{d^{-4\beta+\gamma-2}}{2(s-\alpha_1 t)^2}\left\{-\alpha_1\sigma^2+10\beta^2c_0+ 2\alpha_1 \delta\beta\left[c_1 \varepsilon^{2\beta}+c_0\varepsilon^{2\beta+1}\right]\right\}\,.
  \end{array}
\end{equation}

If we impose that
\begin{equation}\label{condition-2}
 0<\delta\leq\frac{\sigma^2}{ 2 \beta\left[c_1 \varepsilon^{2\beta}+c_0\varepsilon^{2\beta+1}\right]}\,,
\end{equation}
then from \eqref{estimate:2} we get
\begin{eqnarray*}
  \partial_t\xi+\frac 52 a|\nabla\xi|^2+{\rm{div}}(a\nabla \xi)
  &\leq&\frac{d^{-4\beta-2+\gamma}}{2(s-\alpha_1 t)^2}\left\{-\frac{\alpha_1}{2}\sigma^2+10c_0\beta^2\right\}\,.
  \end{eqnarray*}
Now, observe that in view of assumption \eqref{eqagg2}, condition \eqref{condition-2} is true.
Finally, if
\begin{equation}\label{condition-3}
 \alpha_1\geq\frac{40 c_0 \beta^2}{\sigma^2}\,,
\end{equation}
then
\begin{eqnarray*}
  \partial_t\xi+\frac 52 a\,|\nabla\xi|^2+{\rm{div}}(a\,\nabla \xi)
  &\leq&\frac{d^{-4\beta-2+\gamma}}{2(s-\alpha_1 t)^2}\left\{-\frac{\alpha_1}{4}\sigma^2\right\}<0\,.
\end{eqnarray*}

\medskip

Now consider  the region
$$\Omega^{\varepsilon-\varepsilon_1}\setminus \overline{\Omega^{\varepsilon}}=\{x\in \Omega \;|\; \;\varepsilon-\varepsilon_1< d(x) \leq \varepsilon\}\,$$
For any  $x\in \Omega^{\varepsilon-\varepsilon_1}\setminus\overline{\Omega^{\varepsilon}}, t\in (\tau-\delta, \tau)$, thanks to \eqref{estimate:1}, \eqref{laplace-d}, \eqref{lb-gradd}, \eqref{condition-beta}, \eqref{eqagg20} we have:
\begin{eqnarray*}
&& \partial_t\xi+\frac 52 a\,|\nabla\xi|^2+{\rm{div}}(a\,\nabla \xi)\\
&&\qquad\leq \frac1{2(s-\alpha_1 t)^2}\Big\{\zeta^2(-\alpha_1+5c_0\beta^2) \\&&\qquad\quad+2(s-\alpha_1t)\beta \big(\zeta [c_1 d^{\gamma-\beta-2}+ c_0k_0 d^{\gamma-\beta-1}]-\beta\tilde c_0 \nu_0\big)\Big\}\,.
\end{eqnarray*}
Observe that for any $x\in \Omega^{\varepsilon-\varepsilon_1}\setminus\overline{\Omega^{\varepsilon}}$,
\begin{equation}\label{eqagg22}
\zeta(x)\leq (\varepsilon-\varepsilon_1)^{-\beta}-\varepsilon^{-\beta}=\varepsilon^{-\beta}[(1-c)^{-\beta}-1],
\end{equation}
while
\begin{equation}\label{eqagg23}
d(x)<\varepsilon\,.
\end{equation}
In view of \eqref{eqagg32}, \eqref{eqagg33}, \eqref{eqagg22}, \eqref{eqagg23}, we obtain
\begin{eqnarray*}
&&\partial_t\xi+\frac 52 a(x,t)|\nabla\xi|^2+{\rm{div}}(a(x,t)\nabla \xi)\\
 &&\qquad\leq \frac{\beta}{s-\alpha_1 t}\left\{[(1-c)^{-\beta}-1](c_1+c_0k_0)-\beta\tilde c_0\nu_0 \right\}<0,
\end{eqnarray*}

thanks to \eqref{eqagg21}.

\bigskip

{\textbf{Claim 2:}} Condition (E2) holds.

{\it Proof for Claim 2.} Using \eqref{eta-0} and \eqref{eta-1}  we have
\begin{eqnarray*}
 \eta\; \div (a(x,t)\nabla\eta)+\frac 52|\nabla \eta|^2a(x,t)
 &=&\eta\div a(x,t)\nabla \eta+\eta a(x,t) \Delta \eta+ \frac 52|\nabla \eta|^2a(x,t)\\
 &\leq&|\nabla a(x,t)||\nabla \eta|+ |a(x,t)| |\Delta \eta|+ \frac 52|\nabla \eta|^2|a(x,t)|\\
 &\leq&c_0\gamma d^{\gamma-1}\frac{A_1}{\varepsilon}+c_0d^{\gamma}\frac{1}{\varepsilon^{2}}+\frac 52\frac{A_1^2}{\varepsilon^2}c_0d^{\gamma}\\
 &=&\frac{1}{\varepsilon^2}d^{\gamma}\left(d^{-1}A_1 c_1 \varepsilon+c_0+\frac 52 A_1^2 c_0\right)\,.
\end{eqnarray*}
%
Because of the support conditions of $\nabla\eta$ and $\Delta \eta$ (contained in the set $\Omeet\setminus \Omega^{\frac 23 \varepsilon}=\{x\in \Omega: \frac{\varepsilon}{2}<d\leq \frac 23\varepsilon\}$), the term $d^{-1}A_1 c_1 \varepsilon+c_0+\frac 52 A_1^2 c_0$ is bounded by a constant independent of $\varepsilon$, and the claim follows.

\bigskip

 Finally, inserting (E1) and (E2) in \eqref{eq:4} we obtain
 \begin{equation*}
 \begin{array}{rlll}
 \int_{\Omeet}\psia(u(x,\tau))\eta^2(x)e^{\xi(x,\tau)}\, dx
 &\leq &\int_{\Omeet}\psia(u(x,\tau-\delta))\eta^2(x)e^{\xi(x,\tau-\delta)}\, dx &\\
 \\
 &&\quad+ C_1 \varepsilon^{\gamma-2}\iint_{\tau-\delta}^{\tau}\int_{\Omega^{\frac{\varepsilon}{2}}\setminus\Omega^{\frac{2}{3}\varepsilon}}\psia(u(x,t)) e^{\xi(x,t)} \, dx\, dt\,,
 \end{array}
\end{equation*}
with $C_1>0$, independent of $\varepsilon$, as in $(E_2)$. This completes the proof.
\end{proof}

\bigskip

\begin{proof}[Proof of Proposition \ref{lem1}]
For every $\alpha>0$ define
$$\psia(z)=(z^2+\alpha)^{\frac 12}, \qquad z\in \R\,,$$
with $\alpha>0$. Since $\psia''\geq 0$, in view of \eqref{eqagg40} we can infer that
$\psia$ is a subsolution of \eqref{TD}. The application of Lemma \ref{pr1} yields
 \begin{equation}\label{start}
\begin{array}{rlll}
\int_{\Omeet}\psia(u(x,\tau))\eta^2(x)e^{\xi(x,\tau)}\, dx
&\leq&\int_{\Omeet}\psia(u(x,\tau-\delta))\eta^2(x)e^{\xi(x,\tau-\delta)}\, dx &\\
\\
&+& C_1\varepsilon^{\gamma-2} \iint_{\Omeet\setminus \Omega^{\frac 23 \varepsilon}\times (\tau-\delta, \tau)}\psia(u(x,t)) e^{\xi(x,t)} \, dx\, dt\,;
\end{array}
\end{equation}
here $\xi$ is defined as in \eqref{xi}, and conditions \eqref{condition-beta} and \eqref{eqagg35} hold.

 Using that $\Omeps\subset \Omeet$, $\eta=1$ on $\Omega ^{\frac 23\varepsilon}$
and the positivity of the integrand we have
 $$\int_{\Omeps}\psia(u(x,\tau))\eta^2(x)e^{\xi(x,\tau)}\, dx=\int_{\Omeps}\psia(u(x,\tau))e^{\xi(x,\tau)}\, dx\leq \int_{\Omeet}\psia(u(x,\tau))e^{\xi(x,\tau)}\, dx\,. $$
 Letting  $\alpha\rightarrow 0^+$ in \eqref{start}, applying the Lebesgue's
 dominated convergence theorem and observing that $0\leq\eta\leq 1$ we obtain
  \begin{equation}\label{L-dc}
\begin{array}{rlll}
\int_{\Omeps}|u(x,\tau)|e^{\xi(x,\tau)}\, dx
&\leq&\int_{\Omeet}|u(x,\tau-\delta)|e^{\xi(x,\tau-\delta)}\, dx &\\
\\
&+& C_1\varepsilon^{\gamma-2} \iint_{\Omeet\setminus \Omega^{\frac 23 \varepsilon}\times (\tau-\delta, \tau)}|u(x,t)| e^{\xi(x,t)} \, dx\, dt\,.
\end{array}
\end{equation}
Recalling \eqref{zeta}, we first notice that $\xi=0$ in $\Omeps$ for any
$t\in [\tau-\delta, \tau]$. Choose $s$ as in \eqref{eqagg35}. Therefore,  $\xi(x,t)<0$ for all $x\in \Omega \setminus \overline{\Omeps}$
and $t\in (\tau-\delta,\tau)$.
Since $$\Omeet\setminus \Omega^{\frac 23\varepsilon}=\{x\in \Omega: \frac{\varepsilon}{2}<d(x)\leq \frac{2}{3}\varepsilon\}\subset \Omega \setminus \Omeps,$$ then
$$\zeta(x)=d^{-\beta}-\varepsilon^{-\beta}= \varepsilon^{-\beta}\left(\left(\frac{\varepsilon}{d}\right)^{\beta}-1\right) \quad \mbox{ in } \Omeet\setminus \Omega^{\frac 23\varepsilon}\,.$$
Using
$$\left(\frac{\varepsilon}{d}\right)^{\beta}\geq \left(\frac 32\right)^{\beta} \quad \mbox{ for }\beta>0\,,$$
we have
$$\zeta(x)\geq \tilde{C}\varepsilon^{-\beta} \qquad \mbox{ with } \tilde{C}=\left(\frac 32\right)^{\beta}-1\,.$$
Moreover, we have
$$\xi(x,t)=-\frac{\zeta^2(x)}{2(s-\alpha_1 t)}\leq -\frac{\tilde{C}^2 \varepsilon^{-2\beta}}{4\alpha_1 \delta} \qquad \mbox{ in } \Omeet\setminus \Omega^{\frac 23\varepsilon} \,.$$
Finally, inserting this bound in \eqref{L-dc} we have
  \begin{equation*}
\begin{array}{rlll}
\int_{\Omeps}|u(x,\tau)|\, dx
&\leq&\int_{\Omeet}|u(x,\tau-\delta)|\, dx &\\
\\
&+&  C_1\varepsilon^{\gamma-2}e^{-\frac{\tilde{C}^2 \varepsilon^{-2\beta}}{4\alpha_1 \delta} } \iint_{\Omeet\setminus \Omega^{\frac 23 \varepsilon}\times (\tau-\delta, \tau)}|u(x,t)| \, dx\, dt\,.
\end{array}
\end{equation*}
Due to condition \eqref{eq4af}, it follows that
\begin{equation*}
\begin{array}{rlll}
\int_{\Omeps}|u(x,\tau)|\, dx
&\leq&\int_{\Omeet}|u(x,\tau-\delta)|\, dx &\\
\\
&+& C_1 \operatorname{meas}(\Omega) T\varepsilon^{\gamma-2}e^{-\frac{\tilde{C}^2 \varepsilon^{-2\beta}}{4\alpha_1 \delta} } e^{\theta \varepsilon^{(2-\gamma)}}\,.
\end{array}
\end{equation*}
In view of \eqref{condition-beta}, since $0<\delta\leq\frac{\tilde{C}^2}{4\theta\alpha_1}$, we obtain
\begin{equation*}
\begin{array}{rlll}
\int_{\Omeps}|u(x,\tau)|\, dx
&\leq&\int_{\Omeet}|u(x,\tau-\delta)|\, dx + C_1 \operatorname{meas}(\Omega) T\varepsilon^{\gamma-2}\,.
\end{array}
\end{equation*}
This completes the proof.
\end{proof}

\begin{proof}[Proof Lemma \ref{lemma2}]
The thesis follows by arguing as in the proof of  \cite[Proposition 4.1]{Pu2}. The only difference is that instead of \eqref{e3} in \cite[Proposition 4.1]{Pu2} there was
\begin{equation}\label{e3cbis}
\int_{\Omega^{\e}} u^2(x, \tau)\,dx \leq \int_{\Omega^{\frac{\epsilon}2}} u^2(x, \tau-\delta)\, dx + \hat{C}\e^{\mu}\,.
\end{equation}
However, such difference dose not affect the argument to get the conclusion.
\end{proof}

\section{Proof of Theorem \ref{th2}}\label{sth2}
Let $\beta=-\gamma+2$ whenever $\gamma\in (1,2);$ let $\beta=b>0$ whenever $\gamma=2$, with $b>0$ arbitrary.
Consider $\ell\in \left(0,\frac 12\right)$ such that
\begin{equation}\label{eqagg30}
[1-(1-\ell)^{\beta}](c_1+c_0k_0)-\beta\nu_0\tilde c_0 <0\,,
\end{equation}
and define
\begin{equation}\label{eqagg31}
\bar \sigma:=1-(1-\ell)^{\beta}\,.
\end{equation}

Define the functions
\begin{equation}\label{zeta2}
 \zeta(x)=\begin{cases}
            0 &  \mbox{ for } x\in \Omeps\,\\
            \varepsilon^{\beta}-d^{\beta}(x) & \mbox{ for } x\in \Omega\setminus \Omeps\,,
           \end{cases}
\end{equation}
and
\begin{equation}\label{eqagg34}
\xi(x)=-\frac{\zeta^2(x)}{2(s-\alpha_1 t)}\quad \textrm{for all }\,\, x\in \Omega, t\neq \frac s{\alpha_1}\,.
\end{equation}

\smallskip

The proof of Theorem \ref{th2} is based on the combination of the following results.

\begin{prop}\label{lem3}
  Under assumption \eqref{a3} with $\gamma\in [1,2]$, suppose $u\in C^{2,1}(Q_T)\cap C(\Omega\times [0,T])$ solves \eqref{TD2}. Let $\tau>0, b>0,$ $\ell\in \left(0, \frac 12 \right)$ be such that \eqref{eqagg30} is true, $\bar \sigma$ be defined by \eqref{eqagg31}. Suppose that
  \begin{equation*}
      0<\delta<
      \begin{cases}\min\left\{
      \frac{\bar \sigma^2}{16(-\gamma+2)(c_1 +c_0)}\varepsilon^{-2\gamma+4}, \tau\right\} & \textrm{ for }\gamma\in[1,2) \\
      \min\left\{\frac{\bar\sigma^2}{16b[c_1 +(b-1)^++c_0]}, \tau\right\} & \textrm{ for }\gamma=2,
      \end{cases}
  \end{equation*}
  \begin{equation*}
  \alpha_1 \geq\begin{cases}\max\left\{ \frac{40c_0(-\gamma+2)^2}{\bar\sigma^2}, 5c_0(2-\gamma)^2\right\} &  \textrm{ for }\gamma\in [1,2)\\
  \max\left\{\frac{40c_0b^2}{\bar\sigma^2}, 5c_0 b^2\right\} & \textrm{ for }\gamma=2\,,
  \end{cases}
\end{equation*}
  and that, for some $C>0, \mu>0$,
 \begin{equation}\label{ub:exp-growth2}
  \int_0^T\int_{\Omeet\setminus \Omega^{\frac{2}{3}\varepsilon}}|u(x,t)|d(x)^{\gamma-2}\,dx\, dt\leq C\varepsilon^{\mu} \mbox{ for every } \varepsilon\in (0,\varepsilon_0)\,.
 \end{equation}
Then
\begin{equation}\label{est-lem}
\begin{array}{rlll}
\int_{\Omeps}|u(x,\tau)|\, dx
&\leq&\int_{\Omeet}|u(x,\tau-\delta)|\, dx + \tilde{C}\varepsilon^{\mu}\,,
\end{array}
\end{equation}
for some constant $\tilde C>0$ independent of $\varepsilon$.
\end{prop}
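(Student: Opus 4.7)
The proof parallels the structure of Proposition \ref{lem1}. We apply the subsolution inequality \eqref{eqagg40} to the regularization $\psi_\alpha(u) = (u^2+\alpha)^{1/2}$, test against $\eta^2 e^\xi$ on $\Omega^{\varepsilon/2}\times(\tau-\delta,\tau)$, and integrate by parts twice. Splitting the integral across $\partial\Omega^\varepsilon$, the boundary terms vanish thanks to \eqref{14bis} and \eqref{eta-0}, and we arrive at an inequality of the form \eqref{eq:4}. The argument then reduces to establishing
\begin{equation*}
(\mathrm{E1'})\quad \partial_t\xi + \tfrac{5}{2} a|\nabla \xi|^2 + \div(a\nabla\xi) \leq 0 \quad \text{on } [\Omega\setminus\partial\Omega^\varepsilon]\times (\tau-\delta,\tau),
\end{equation*}
\begin{equation*}
(\mathrm{E2'})\quad \eta\,\div(a\nabla\eta) + \tfrac{5}{2}|\nabla\eta|^2 a \leq C_1 [d(x)]^{\gamma-2} \quad \text{on } \Omega\times(\tau-\delta,\tau),
\end{equation*}
for some $C_1>0$ independent of $\varepsilon$. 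The key departure from Lemma \ref{pr1} is that (E2') must retain the weight $d^{\gamma-2}$, since this is precisely what couples with \eqref{ub:exp-growth2} to produce $\varepsilon^\mu$ at the end.

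For (E1'), with $\zeta(x) = \varepsilon^\beta - d^\beta(x)$ we have $\nabla\zeta = -\beta d^{\beta-1}\nabla d$ and $\Delta\zeta = -\beta(\beta-1)d^{\beta-2}|\nabla d|^2 - \beta d^{\beta-1}\Delta d$; substituting into $\partial_t\xi + \tfrac{5}{2}a|\nabla\xi|^2 + \div(a\nabla\xi)$ produces an identity analogous to \eqref{estimate:1}, the key new feature being that $-a\beta^2 d^{2\beta-2}|\nabla d|^2$ now appears with a favourable (negative) sign. The plan is to split $\Omega\setminus\overline{\Omega^\varepsilon}$ into an \emph{inner} region $\{d\leq(1-\ell)\varepsilon\}$ and an \emph{outer} region $\{(1-\ell)\varepsilon < d\leq\varepsilon\}$. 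In the inner region, $\zeta\geq\bar\sigma\varepsilon^\beta$, so $-\alpha_1\zeta^2\leq -\alpha_1\bar\sigma^2\varepsilon^{2\beta}$ dominates the remaining terms (all scaling like $\varepsilon^{2\beta}$), provided $\alpha_1$ is large as in the statement and $\delta$ is small; the upper bound on $\delta$ carrying the factor $\varepsilon^{-2\gamma+4}$ (when $\gamma<2$) is exactly what is required to absorb the mixed terms. In the outer region, $\zeta\leq\bar\sigma\varepsilon^\beta$ is small, and the helpful term satisfies $-a\beta^2 d^{2\beta-2}|\nabla d|^2 \leq -\tilde c_0\beta^2\nu_0^2 d^{\beta}$ (using $a\geq\tilde c_0 d^\gamma$, \eqref{lb-gradd}, and the cancellation $\gamma+2\beta-2=\beta$ arising from $\beta=2-\gamma$), dominating the $\zeta$-linear positive terms bounded via $|\nabla a|\leq c_1 d^{\gamma-1}$ and \eqref{laplace-d}; condition \eqref{eqagg30} is precisely what makes the net sum non-positive. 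The case $\gamma=2$, $\beta=b>0$ is handled analogously, the twist being that $\beta(\beta-1)$ may now be positive, producing an extra positive contribution absorbed through the $(b-1)^+$ term in the bound on $\delta$.

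Claim (E2') is a direct computation: on the annulus $\Omega^{\varepsilon/2}\setminus\Omega^{2\varepsilon/3}$ (the only place where $\nabla\eta$ and $\Delta\eta$ are supported), combining \eqref{a3} with \eqref{eta-1} gives
\begin{equation*}
\eta\,\div(a\nabla\eta) + \tfrac{5}{2}|\nabla\eta|^2 a \leq \tfrac{A_1 c_1}{\varepsilon} d^{\gamma-1} + \tfrac{A_2 c_0}{\varepsilon^2} d^{\gamma} + \tfrac{5 A_1^2 c_0}{2\varepsilon^{2}} d^{\gamma},
\end{equation*}
and since $d$ is comparable to $\varepsilon$ on this support, each term is bounded by a constant multiple of $d^{\gamma-2}$, yielding (E2').

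Assembling (E1') and (E2') in \eqref{eq:4} and sending $\alpha\to 0^+$ by Lebesgue dominated convergence, exactly as in the passage from \eqref{start} to \eqref{L-dc}, we obtain
\begin{equation*}
\int_{\Omega^\varepsilon}|u(x,\tau)|\,dx \leq \int_{\Omega^{\varepsilon/2}}|u(x,\tau-\delta)|\,dx + C_1\iint_{(\Omega^{\varepsilon/2}\setminus\Omega^{2\varepsilon/3})\times(\tau-\delta,\tau)}|u(x,t)|\,[d(x)]^{\gamma-2} e^{\xi}\,dx\,dt.
\end{equation*}
The final step is to note that $e^\xi\leq 1$ (since $\xi\leq 0$) and invoke \eqref{ub:exp-growth2} to conclude with $\tilde C=C_1 C$. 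The main obstacle is the region-by-region verification of (E1'): the outer-region balance depends crucially on the cancellation \eqref{eqagg30}, while the inner-region balance requires a careful juggling of the $\varepsilon^{-2\gamma+4}$ scaling of $\delta$ against the size of $\alpha_1$, both of which must be adjusted between the sub-cases $\gamma\in[1,2)$ and $\gamma=2$.
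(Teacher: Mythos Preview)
Your proposal is correct and follows essentially the same route as the paper: the paper packages the integration-by-parts computation and the verification of what you call (E1'), (E2') into a separate Lemma~\ref{prop2} (labelled (D1), (D2) there), and then the proof of Proposition~\ref{lem3} consists of exactly the dominated-convergence passage $\alpha\to 0^+$, the observation $e^\xi\le 1$, and the application of \eqref{ub:exp-growth2}. Your inner/outer splitting at $d=(1-\ell)\varepsilon$, the use of $\zeta\ge\bar\sigma\varepsilon^\beta$ in the inner region, the appeal to \eqref{eqagg30} in the outer region, and the treatment of the $(b-1)^+$ term when $\gamma=2$ all match the paper.
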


\begin{lemma}\label{lemma2bis}
Let $u\in C(\Omega\times [0, T])$ with
\begin{equation}\label{e88}
u=0\quad \textrm{in}\;\; \Omega\times\{0\}\,.
\end{equation}
Suppose that there exist $c>0, \hat C>0, \e_0>0, \mu_2>\mu_1>0$ such that for any $\e\in (0, \e_0)$, $\tau\in (0,T)$,
\begin{equation}\label{e22}
0<\delta< \min\{\tau, c \e^{\mu_1}\}\,,
\end{equation}
there holds
\begin{equation}\label{e33}
\int_{\Omega^{\e}} u(x, \tau)\,\dd x \leq \int_{\Omega^{\frac{\epsilon}2}} u(x, \tau-\delta) \dd x + \hat{C} \e^{\mu_2}\,.
\end{equation}
Then
\[u\equiv 0\quad \textrm{in}\;\; \Omega\times (0, T]\,.\]
\end{lemma}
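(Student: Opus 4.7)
The plan is to iterate \eqref{e33} geometrically in the scale parameter $\varepsilon$, following the approach of \cite[Proposition 4.1]{Pu2} used in the proof of Lemma \ref{lemma2}, with modifications to handle the scale-dependent step constraint $\delta<c\varepsilon^{\mu_1}$. Fix $\tau_0\in(0,T]$ and an initial scale $\bar\varepsilon\in(0,\varepsilon_0)$; set $\varepsilon_k:=\bar\varepsilon\,2^{-k}$, pick time increments $\delta_k\in(0,\min(\tau_k,c\,\varepsilon_k^{\mu_1}))$ with $\tau_{k+1}:=\tau_k-\delta_k$, and telescope to get
\[
\int_{\Omega^{\bar\varepsilon}}|u(x,\tau_0)|\,dx\leq\int_{\Omega^{\varepsilon_N}}|u(x,\tau_N)|\,dx+\hat C\sum_{k=0}^{N-1}\varepsilon_k^{\mu_2}.
\]
Two structural facts drive the analysis: the error sum is controlled by the geometric tail $\hat C\bar\varepsilon^{\mu_2}/(1-2^{-\mu_2})$ independently of $N$, while the total admissible time across all iterations is bounded by $c\bar\varepsilon^{\mu_1}/(1-2^{-\mu_1})$.

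For $\bar\varepsilon$ large enough that this admissible total time exceeds $\tau_0$ --- that is, $\bar\varepsilon\geq C_1\tau_0^{1/\mu_1}$ for a suitable constant $C_1$ --- the $\delta_k$'s can be chosen so that $\tau_N\to 0$, and combining $u(\cdot,0)\equiv 0$ with the continuity of $u$ (which makes $\int_{\Omega^{\varepsilon_N}}|u(\tau_N)|\,dx$ arbitrarily small) yields the key estimate
\[
\int_{\Omega^{\bar\varepsilon}}|u(x,\tau_0)|\,dx\leq C_2\,\bar\varepsilon^{\mu_2}\qquad\text{for every }\bar\varepsilon\in[C_1\tau_0^{1/\mu_1},\varepsilon_0).
\]
This is strictly weaker than the analogous bound in Lemma \ref{lemma2}, because $\bar\varepsilon$ cannot be sent to zero for fixed $\tau_0>0$; the condition $\mu_2>\mu_1$ is what will compensate.

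To conclude $u\equiv 0$, I would argue by continuation in $\tau$. Set $T^*:=\sup\{s\in[0,T]:u\equiv 0\text{ on }\Omega\times[0,s]\}$; by continuity $T^*\geq 0$ and $u(\cdot,T^*)\equiv 0$. Assuming for contradiction $T^*<T$, I rerun the iteration from each $\tau_0\in(T^*,T]$ but terminate at $\tau_N=T^*$ rather than at $0$ (feasible when $\bar\varepsilon\geq C_1(\tau_0-T^*)^{1/\mu_1}$). Taking $\bar\varepsilon=C_1(\tau_0-T^*)^{1/\mu_1}$ produces the refined inequality
\[
\int_{\Omega^{C_1(\tau_0-T^*)^{1/\mu_1}}}|u(x,\tau_0)|\,dx\leq C_3(\tau_0-T^*)^{\mu_2/\mu_1},
\]
in which the exponent $\mu_2/\mu_1>1$ is strictly super-linear in $\tau_0-T^*$. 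Combining this super-linear vanishing with the continuity of $u$ on $\Omega\times[0,T]$ forces $u\equiv 0$ in a right-neighborhood of $T^*$, contradicting the definition of $T^*$, so $T^*=T$ and $u\equiv 0$ on $\Omega\times(0,T]$.

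The principal obstacle is precisely this final continuation step: the iteration only controls $|u|$ in an $L^1$ sense on the set $\Omega^{\bar\varepsilon}$ with $\bar\varepsilon$ bounded below by a positive power of $\tau_0-T^*$, so converting this integral estimate into the pointwise vanishing of $u$ in a right-neighborhood of $T^*$ requires the strict gain $\mu_2/\mu_1-1>0$ to be used decisively together with the continuity of $u$ on $\Omega\times[0,T]$.
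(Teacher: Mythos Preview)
Your geometric scaling $\varepsilon_k=\bar\varepsilon\,2^{-k}$ telescopes cleanly, but, as you observe, it makes the total admissible time $\sum_k c\,\varepsilon_k^{\mu_1}$ finite and of order $\bar\varepsilon^{\mu_1}$, so $\bar\varepsilon$ cannot be sent to $0$ for fixed $\tau_0$. Your proposed remedy---the continuation argument defining $T^*$ and deriving
\[
\int_{\Omega^{\bar\varepsilon}}|u(x,\tau_0)|\,dx\leq C_3(\tau_0-T^*)^{\mu_2/\mu_1}\qquad\text{with }\bar\varepsilon=C_1(\tau_0-T^*)^{1/\mu_1}
\]
---does not close. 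An $L^1$ bound that decays super-linearly in $\tau_0-T^*$, even on sets exhausting $\Omega$, does \emph{not} force $u$ to vanish on a right-neighbourhood of $T^*$: for instance $u(x,t)=(t-T^*)_+^{\alpha}$ with $\alpha\geq\mu_2/\mu_1$ is continuous, vanishes on $\Omega\times[0,T^*]$, and satisfies your intermediate estimate, yet is nonzero for every $t>T^*$. You yourself flag this step as ``the principal obstacle'' without indicating how continuity and the exponent gap $\mu_2/\mu_1>1$ would actually be combined; as written, this is a genuine gap, and I do not see a way to rescue the continuation scheme without essentially reverting to a different scale sequence.

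The paper sidesteps the continuation argument entirely by taking a \emph{sub-geometric} sequence $\varepsilon_k:=\varepsilon\,k^{-1/\mu_1}$. The key point is that then
\[
\sum_{k\geq 1}\varepsilon_k^{\mu_1}=\varepsilon^{\mu_1}\sum_{k\geq 1}\frac{1}{k}=+\infty,
\]
so for \emph{every} starting scale $\varepsilon>0$, however small, one can choose admissible steps $\delta_k\leq c\,\varepsilon_k^{\mu_1}$ whose sum equals $\tau$, reaching $t=0$ after finitely many iterations. Simultaneously, since $\mu_2>\mu_1$, the accumulated error obeys
\[
\sum_{k\geq 1}\varepsilon_k^{\mu_2}=\varepsilon^{\mu_2}\sum_{k\geq 1}k^{-\mu_2/\mu_1}=S\,\varepsilon^{\mu_2}<\infty.
\]
This yields $\int_{\Omega^{\varepsilon}}|u(x,\tau)|\,dx\leq \hat C S\,\varepsilon^{\mu_2}$ for all small $\varepsilon$, and letting $\varepsilon\to 0$ gives $\int_\Omega|u(x,\tau)|\,dx=0$ directly. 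Thus the hypothesis $\mu_2>\mu_1$ is used exactly once, as the convergence of the $p$-series with $p=\mu_2/\mu_1>1$, rather than through any differential or open-mapping mechanism.
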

Lemma \ref{lemma2bis} is an extension of Lemma \ref{lemma2}. Differently from Lemma \ref{lemma2}, in Lemma \ref{lemma2bis} the bound on $\delta$ goes to zero as $\varepsilon\to 0^+$. To manage this situation, the condition $\mu_2>\mu_1$ will be expedient.

\begin{proof}[Proof of Theorem \ref{th2}] The thesis follows,
  combining Proposition \ref{lem3} and Lemma \ref{lemma2bis}, with $$\mu_1=-2\gamma+4, \quad \mu_2=\mu>\mu_1,$$
 $$
 c=\begin{cases}
 \frac{\bar{\sigma}^2}{16(-\gamma+2)(c_1+c_0)} & \mbox{ for } \gamma\in [1,2)\\
 \frac{\bar{\sigma}^2}{16b[c_1+(b-1)^++c_0]} & \mbox{ for } \gamma=2\,,
 \end{cases}
 $$
 and
 $$\tilde{C}=\hat{C}\,.$$
\end{proof}

\subsection{Proofs of Proposition \ref{lem3} and Lemma \ref{lemma2bis}}

The proof of Proposition \ref{lem3} is based on the following crucial lemma.

  \begin{lemma}\label{prop2}
    Under assumption \eqref{a3} with $\gamma\in [1,2]$, suppose $u\in C^{2,1}(Q_T)\cap C(\Omega\times [0,T])$ solves \eqref{TD2}. Let $\varepsilon\in (0, \varepsilon_0), \tau>0, b>0,$ $\ell\in \left(0, \frac 12 \right)$ be such that \eqref{eqagg30} is true, $\bar \sigma$ be defined by \eqref{eqagg31}. If
    \begin{equation}\label{eqagg32}
        0<\delta<
        \begin{cases}\min\left\{
        \frac{\bar \sigma^2}{16(-\gamma+2)(c_1 +c_0)}\varepsilon^{-2\gamma+4}, \tau\right\} & \textrm{ for }\gamma\in[1,2) \\
        \min\left\{\frac{\bar\sigma^2}{16b[c_1 +(b-1)^++c_0]}, \tau\right\} & \textrm{ for }\gamma=2,
        \end{cases}
    \end{equation}
    \begin{equation}\label{eqagg33}
    \alpha_1 \geq\begin{cases}\max\left\{ \frac{40c_0(-\gamma+2)^2}{\sigma^2}, 5c_0(2-\gamma)^2\right\} &  \textrm{ for }\gamma\in [1,2)\\
    \max\left\{\frac{40c_0b^2}{\bar\sigma^2}, 5c_0 b^2\right\} & \textrm{ for }\gamma=2\,,
    \end{cases}
    \end{equation}

    then
  \begin{equation}\label{bound-prop2}
    \begin{array}{rlll}
    \int_{\Omeet}\psia(u(x,\tau))\eta^2(x)e^{\xi(x,\tau)}\, dx
    &\leq&\int_{\Omeet}\psia(u(x,\tau-\delta))\eta^2(x)e^{\xi(x,\tau-\delta)}\, dx &\\
    \\
    &+& C_1\int_{\tau-\delta}^{\tau}\int_{\Omega^{\frac{\varepsilon}{2}}\setminus\Omega^{\frac{2}{3}\varepsilon}}\psia(u(x,t)) e^{\xi(x,t)} d^{\gamma-2}(x)\, dx\, dt\,,
    \end{array}
   \end{equation}
   where $\xi$ is defined as in \eqref{eqagg34} with $s=\alpha_1(\tau+\delta)$, for a suitable $C_1>0$ independent of $\varepsilon$.
  \end{lemma}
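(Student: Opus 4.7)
The statement is the exact $L^1$ analogue, for the regime $\gamma\in[1,2]$, of Lemma~\ref{pr1}, with the fundamental solution weight $\zeta=\varepsilon^\beta-d^\beta$ replacing $\zeta=d^{-\beta}-\varepsilon^{-\beta}$. Accordingly, I would follow the same skeleton: start from the identity $\psi_\alpha''\ge 0\Rightarrow \psi_\alpha(u)$ is a subsolution of \eqref{TD2} (this is \eqref{eqagg40}), test against $\eta^2 e^{\xi}$ over $\mathcal C=\Omega^{\varepsilon/2}\times(\tau-\delta,\tau)$, and integrate by parts twice in space. The key point justifying the second integration by parts is that $\zeta$ vanishes identically on $\partial\Omega^\varepsilon$ (since $d=\varepsilon$ there), so $\nabla\xi=-\zeta\nabla\zeta/(s-\alpha_1 t)=0$ on $\partial\Omega^\varepsilon$, which is the analogue of \eqref{14bis}; meanwhile $\eta\equiv 1$ in a neighbourhood of $\partial\Omega^\varepsilon\subset\Omega^{2\varepsilon/3}$, so the $\eta$-boundary terms also cancel when one splits the integrals across $\partial\Omega^\varepsilon$, exactly as in the proof of Lemma~\ref{pr1}. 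After a single application of Young's inequality to the cross term $3\iint_{\mathcal C}\psi_\alpha \eta\nabla\eta\cdot\nabla\xi\, a\, e^\xi$, this yields the template inequality \eqref{eq:4}, and the proof reduces to verifying two pointwise bounds, which I label $(E1')$ and $(E2')$.

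\textbf{Verifying} $(E1')$: $\partial_t\xi+\frac52 a|\nabla\xi|^2+\mathrm{div}(a\nabla\xi)\le 0$ in $(\Omega\setminus\partial\Omega^\varepsilon)\times(\tau-\delta,\tau)$. It is trivial in $\Omega^\varepsilon$ since $\xi\equiv0$ there. On $\Omega\setminus\overline{\Omega^\varepsilon}$, a direct computation using $\nabla\zeta=-\beta d^{\beta-1}\nabla d$ and $\Delta\zeta=-\beta(\beta-1)d^{\beta-2}|\nabla d|^2-\beta d^{\beta-1}\Delta d$ produces an expansion
\[
\partial_t\xi+\tfrac52 a|\nabla\xi|^2+\mathrm{div}(a\nabla\xi)=\tfrac{-\alpha_1\zeta^2+5a\beta^2\zeta^2 d^{2\beta-2}|\nabla d|^2}{2(s-\alpha_1t)^2}+\tfrac{A(x,t)}{s-\alpha_1 t},
\]
where $A$ collects the lower-order drift/Laplace contributions. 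Mirroring the two-region decomposition of Lemma~\ref{pr1}, I would set $\varepsilon_1=\ell\varepsilon$ and treat the inner layer $\Omega\setminus\Omega^{(1-\ell)\varepsilon}$ by using $\zeta^2\ge\bar\sigma^2\varepsilon^{2\beta}$ (which follows from $d\le(1-\ell)\varepsilon$ and the definition \eqref{eqagg31} of $\bar\sigma$) together with the identity $\gamma+2\beta-2=\beta$ (valid for $\beta=2-\gamma$, and absorbed into the factor $\varepsilon^\beta\le 1$ when $\gamma=2$, $\beta=b$) to show that the first bracket is non-positive as soon as $\alpha_1\ge 5c_0\beta^2$, and that $|A|$ is controlled by $\delta$ times a constant times $\zeta^2/(s-\alpha_1 t)$; the smallness condition \eqref{eqagg32} on $\delta$ then absorbs it into the $-\alpha_1\zeta^2$ term, now using $\alpha_1\ge 40c_0\beta^2/\bar\sigma^2$. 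In the outer layer $\Omega^{(1-\ell)\varepsilon}\setminus\overline{\Omega^\varepsilon}$ I would instead use the upper bound $\zeta\le\bar\sigma\varepsilon^\beta$ together with the \emph{lower} bound $|\nabla d|\ge\nu_0$, which makes the term $-a|\nabla\zeta|^2/(s-\alpha_1 t)$ provide a definitely negative contribution of order $\beta\nu_0\tilde c_0\varepsilon^\beta/(s-\alpha_1 t)$; after sharing the good positive factor $\varepsilon^\beta\beta/(s-\alpha_1 t)$ across all drift terms and discarding the (non-positive when $\beta\le 1$) term $a\zeta\beta(\beta-1)d^{\beta-2}|\nabla d|^2$, the sign condition \eqref{eqagg30} is precisely what is needed to close the inequality. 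For $\gamma=2$ with $b>1$, the $(b-1)^+$ term in \eqref{eqagg32} is exactly designed to dominate this extra positive contribution via the smallness of $\delta$.

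\textbf{Verifying} $(E2')$ and concluding. On the cut-off annulus $\Omega^{\varepsilon/2}\setminus\Omega^{2\varepsilon/3}$, \eqref{a3} and \eqref{eta-1} give
\[
\eta\,\mathrm{div}(a\nabla\eta)+\tfrac52|\nabla\eta|^2 a\le\tfrac{C}{\varepsilon^2}d^\gamma=C\tfrac{d^\gamma}{\varepsilon^2}\le \tilde C_1 d^{\gamma-2},
\]
where the last inequality uses $d\asymp\varepsilon$ in that annulus and $\gamma-2\le 0$. Plugging $(E1')$ and $(E2')$ into the template \eqref{eq:4} and letting $\alpha\to 0^+$ via dominated convergence (as in the proof of Proposition~\ref{lem1}) yields \eqref{bound-prop2}. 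The single step I anticipate as the main obstacle is the sign analysis of the second bracket $A$ in the outer layer, because unlike the case $\gamma>2$ the exponent balance $\gamma+2\beta-2=\beta$ makes all the drift terms genuinely of the same order $\varepsilon^\beta$, so the winning margin is purely the numerical inequality \eqref{eqagg30}; one must also be attentive that for $\gamma=2,\ b>1$ the term with factor $\beta(\beta-1)$ changes sign and has to be absorbed through $\delta$ rather than discarded, which is the origin of the bifurcated definitions of $\delta$ and $\alpha_1$ in the hypotheses.
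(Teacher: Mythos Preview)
Your proposal follows essentially the same route as the paper's proof: reproduce the template inequality \eqref{eq:4} (relabeled \eqref{eq:3} in this section) via two integrations by parts and Young's inequality, then verify the two pointwise conditions---your $(E1')$, $(E2')$ are exactly the paper's $(D1)$, $(D2)$---using the same two-layer decomposition $\Omega\setminus\Omega^{(1-\ell)\varepsilon}$ and $\Omega^{(1-\ell)\varepsilon}\setminus\overline{\Omega^\varepsilon}$, the lower bound $\zeta^2\ge\bar\sigma^2\varepsilon^{2\beta}$ in the inner layer, and the sign condition \eqref{eqagg30} in the outer layer. One slip: do \emph{not} pass to the limit $\alpha\to 0^+$ at the end---the claimed inequality \eqref{bound-prop2} still carries $\psi_\alpha$, and that limit is taken only later, in the proof of Proposition~\ref{lem3}.
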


\begin{proof}[Proof of Lemma \ref{prop2}] Let $\zeta$ be defined by \eqref{zeta2}.
Then
\begin{equation}
 \nabla \zeta=-\beta d^{\beta-1}\nabla d \qquad \mbox{ and } \qquad \Delta \zeta=-\beta(\beta-1)d^{\beta-2}(\nabla d)^2-\beta d^{\beta-1}\Delta d\,.
\end{equation}
Let $\xi$ be defined as in \eqref{eqagg34}. Imitating the arguments in Proposition \ref{pr1}, we derive
\begin{equation}\label{eq:3}
\begin{array}{rlll}
&&\int_{\Omeet}\psia(u(x,\tau))\eta^2(x)e^{\xi(x,\tau)}\, dx\\
\\
&&\leq\int_{\Omeet}\psia(u(x,\tau-\delta))\eta^2(x)e^{\xi(x,\tau-\delta)}\, dx &\\
\\
&&\quad+ \iint_{\C}\psia(u(x,t)) e^{\xi(x,t)} [\eta(x){\rm{div}} (a(x,t)\nabla\eta(x))+\frac 52|\nabla \eta(x)|^2a(x,t)]\, dx\, dt\\
\\
&&\quad+\iint_{\C}\psia(u(x,t)) e^{\xi(x,t)}\eta^2(x)[\partial_t\xi+\frac 52|\nabla \xi(x,t)|^2a(x,t)+{\rm{div}} (a(x,t)\nabla \xi(x,t))]\, dx\, dt\,,
\end{array}
\end{equation}
which is exactly \eqref{eq:4}.
Our next goal is to ensure that the following two conditions
\begin{align*}
&\partial_t\xi+\frac 52 a\,|\nabla\xi|^2+{\rm{div}}(a\,\nabla \xi)\leq 0 \quad \mbox{ in }[\Omega\setminus \partial\Omeps]\times(\tau-\delta,\tau)\, ; \tag{D1}\\
\\
 &\eta\, {\rm{div}} (a\,\nabla\eta)+\frac 52|\nabla \eta|^2a\,\leq C_1d^{\gamma-2} \quad  \mbox{ in } \Omega\times(\tau-\delta,\tau) \tag{D2}
 \end{align*}
are simultaneously satisfied.

\smallskip

{\textbf{Claim 3:}} Condition (D1) holds.

{\it Proof of for Claim 3}.
By the same arguments used to obtain \eqref{estimate:1}, we deduce that
\begin{equation}\label{estimate:2.1}
\begin{array}{rlll}
 &&\partial_t\xi+\frac 52 a(x,t)|\nabla\xi|^2+{\rm{div}}(a(x,t)\nabla \xi)\\
 \\
 &&=\frac{1}{2(s-\alpha_1 t)^2}\left\{- \alpha_1\zeta^2(x)+5 \beta^2 a(x,t)\,\zeta^2(x)\,d^{2\beta-2}(x)\,|\nabla d(x)|^2\right.\\
 \\
 &&\left.\quad +2(s-\alpha_1 t)\beta\left[\zeta(x)\nabla a(x,t)\cdot \nabla d(x) d^{\beta-1}(x)-\beta a(x,t) d^{2\beta-2}(x)|\nabla d(x)|^2\right.\right.\\
\\
 &&\left.\left.\quad+(\beta-1)a(x,t)\zeta(x) d^{\beta-2}(x)|\nabla d(x)|^2+a(x,t)\zeta(x) d^{\beta-1}(x)\Delta d(x)\right]\right\}.
 \end{array}
 \end{equation}
 Also here, because of \eqref{lb-gradd} and the non-negativity of $a$, we have
 $$-\beta a(x,t) d^{2\beta-2}(x)|\nabla d(x)|^2\leq 0\,.$$
 We now analyze all the other terms on the right-hand-side singularly, using the
fact that in $\Omega \setminus \Omeps$ we have $d(x)\leq\varepsilon$. We start
with the second term:
 \begin{equation*}
  5\beta^2 a(x,t)\zeta^2(x)\,d^{2\beta-2}(x)\,|\nabla d(x)|^2\leq  10 \beta^2 c_0\varepsilon^{2\beta} d^{2\beta-2+\gamma}(x) \leq 10\beta^2c_0 \varepsilon^{4\beta+\gamma-2}\,,
 \end{equation*}
 where we used \eqref{a3}, \eqref{grad-d} and that $\zeta^2\leq 2\varepsilon^{2\beta}$,
 together with the hypothesis that $\gamma+2\beta-2\geq 0$ for the last inequality.
 For the third term we use again \eqref{a3} and \eqref{grad-d} to obtain
 $$\zeta(x)\nabla a(x,t)\cdot \nabla d(x) d^{\beta-1}(x)\leq \varepsilon^{\beta}c_1d^{\gamma-1}(x)|\nabla d(x)|d^{\beta-1}(x)\leq c_1 \varepsilon^{\beta+\gamma-2}\,,$$
 where the last inequality  holds if $\gamma+\beta-2\geq0$.
 Again, if $\gamma+\beta-2\geq0$, the fourth term is estimated easily as
 $$(\beta-1)a(x,t)\zeta(x)d^{\beta-2}(x)|\nabla d(x)|^2\leq c_0(\beta-1)\varepsilon^{\beta}d^{\beta+\gamma-2}(x)\leq(\beta-1)c_0\varepsilon^{2\beta+\gamma-2}\,.$$
 Finally, we estimate the last term as
 $$a(x,t)\zeta(x) d^{\beta-1}(x)\Delta d(x)\leq c_0 \varepsilon^{\beta}d^{\beta+\gamma-1}(x)k_0\leq c_0 \varepsilon^{2\beta+\gamma-1} $$
if $\gamma+\beta-1\geq 0$.
Collecting these estimates in the range $1\leq\gamma\leq2$ we choose $\beta\geq-\gamma+2$,
so that all the previous conditions are satisfied. In particular we set
\begin{equation*}
  \beta=
  \begin{cases}
     -\gamma+2 & \mbox{ for } 1\leq\gamma<2\\
     b & \mbox{ for } \gamma=2
  \end{cases}
\end{equation*}
where $b$ is any positive number.

Finally choose
\begin{equation}\label{eqagg355}
s=\alpha_1(\tau +\delta)\,.
\end{equation}
so, for all $t\in (\tau-\delta, \tau),$
\begin{equation}\label{eqagg200}
\alpha_1\delta<s-\alpha_1t<2\alpha_1\delta\,.
\end{equation}

We now write the set $\Omega\setminus \overline{\Omeps}$ as a union of two disjoint sets
$$\Omega\setminus \overline{\Omeps}=(\Omega\setminus \Omega^{\varepsilon-\varepsilon_2})\cup (\Omega^{\varepsilon-\varepsilon_2}\setminus \overline{\Omega^{\varepsilon}})\,,$$
and analyze the validity of condition $(D1)$ separately in the two domains.
First let us consider the set
$\Omega\setminus \Omega^{\varepsilon-\varepsilon_2}=\{x\in \Omega: d(x)\leq \varepsilon-\varepsilon_2\}$
and look at the case $\gamma\in[1,2)$ and $\gamma=2$ separetely.
\begin{itemize}
  \item
For the case $\gamma\in [1,2)$ and $\beta=-\gamma+2$ we have
\begin{equation*}
\begin{array}{rlll}
 & & \partial_t \xi + \frac 52 a(x,t) |\nabla\xi|^2 + {\rm{div}} (a(x,t) \nabla \xi ) &\\
 \\
 & & \leq \frac{1}{2(s-\alpha_1 t)^2} \left\{ - \alpha_1\zeta^2(x) +  10 c_0\beta^2 \varepsilon^{-3\gamma+6} \right. &\\
 \\
 & &\qquad + \left. 2 (s-\alpha_1t) (-\gamma+2) \left[ c_1 +c_0(-\gamma+1)\varepsilon^{-\gamma+2}+c_0\varepsilon^{-\gamma+3} \right] \right\} \,.&
\end{array}
\end{equation*}
 For the first term on the right-hand-side we claim the existence of a $\bar\sigma\in (0,1]$ such that
 $$-\zeta^2\leq -\bar{\sigma}^2 \varepsilon^{2\beta}\,$$
and this is equivalent to the condition
\begin{equation}\label{this-cond}
  \bar{\sigma}\leq 1-\left(\frac{d}{\varepsilon}\right)^{\beta}\,.
\end{equation}
We set $\varepsilon_2=\ell\varepsilon$ with $\ell\in (0,\frac 12)$ and choose $\bar{\sigma}=1-(1-\ell)^{\beta}$
 so that \eqref{this-cond} is trivially satisfied.
Thus
\begin{equation*}
\begin{array}{rlll}
&&\partial_t\xi+\frac 52 a(x,t)|\nabla\xi|^2+{\rm{div}}(a(x,t)\nabla \xi)\\
\\
&&\leq\frac{1}{2(s-\alpha_1 t)^2}\left\{- \alpha_1\bar{\sigma}^2 \varepsilon^{-2\gamma+4}+   10 c_0(-\gamma+2)^2 \varepsilon^{-3\gamma+6}
\right.\\
\\
&& \qquad\left.+2(s-\alpha_1t)(-\gamma+2) \left[ c_1 +c_0(-\gamma+1)\varepsilon^{-\gamma+2}+c_0\varepsilon^{-\gamma+3} \right]\right\}\,,
\end{array}
\end{equation*}
 and, using that $0<s-\alpha_1 t<2\alpha_1\delta$, we obtain
 \begin{equation*}
 \begin{array}{rlll}
  &&\partial_t\xi+\frac 52 a(x,t)|\nabla\xi|^2+{\rm{div}}(a(x,t)\nabla \xi)\\
  \\
  &&\leq\frac{1}{2(s-\alpha_1 t)^2}\left \{- \alpha_1\bar{\sigma}^2 \varepsilon^{-2\gamma+4}+ 10 c_0(-\gamma+2)^2 \varepsilon^{-3\gamma+6}\right.\\
  \\
  &&\qquad\left.+4\alpha_1\delta(-\gamma+2) \left[ c_1+c_0(-\gamma+1)\varepsilon^{-\gamma+2}+c_0\varepsilon^{-\gamma+3} \right]\right\}\\
  \\
  &&\leq\frac{\varepsilon^{-2\gamma+4}}{2(s-\alpha_1t)^2}\left\{-\alpha_1\bar{\sigma}^2+10c_0(-\gamma+2)^2\varepsilon^{-\gamma+2}\right.\\
  \\
  &&\qquad\left.+4\alpha_1\delta (-\gamma+2)\varepsilon^{2\gamma-4}[c_1 +c_0\varepsilon^{-\gamma+3}]\right\}\,,
  \end{array}
 \end{equation*}
 where in the last inequality we used $(-\gamma+1)\varepsilon^{-\gamma+2}\leq 0$ since $\gamma\geq1$.
  Comparing the three terms (the first with the third and then the first with the second) we obtain
  \begin{equation*}
   \partial_t\xi+\frac 52 a(x,t)|\nabla\xi|^2+{\rm{div}}(a(x,t)\nabla \xi)\leq\frac{\varepsilon^{-3\gamma+6}}{2(s-\alpha_1 t)^2}\left\{- \frac{\alpha_1}{4}\bar{\sigma}^2 \right\}<0
   \,
  \end{equation*}
  if the following two conditions are satisfied:
  \begin{equation*}
    0<\delta\leq \frac{\bar{\sigma}^2\varepsilon^{-2\gamma+4}}{16(-\gamma+2)[c_1 +c_0\varepsilon^{-\gamma+3}]}\qquad \mbox{ and } \qquad\alpha_1\geq \frac{40c_0(-\gamma+2)^2\varepsilon^{-\gamma+2}}{\bar{\sigma}^2}\,.
  \end{equation*}

 \item For the case $\gamma=2$ and $\beta=b>0$ we have
 \begin{equation*}
 \begin{array}{rlll}
  & & \partial_t \xi + \frac 52 a(x,t) |\nabla\xi|^2 + {\rm{div}} (a(x,t) \nabla \xi ) &\\
  \\
  & &\leq \frac{1}{2(s-\alpha_1 t)^2} \left\{ - \alpha_1\zeta^2(x) +  10 c_0b^2 \varepsilon \right. &\\
  \\
  & & \qquad+ \left. 2 (s-\alpha_1t) \beta \left[ c_1 +(b-1)^++c_0\varepsilon \right] \right\} \,.
 \end{array}
\end{equation*}
 Proceeding as above, we deduce easily
 \begin{equation*}
  \partial_t\xi+\frac 52 a(x,t)|\nabla\xi|^2+{\rm{div}}(a(x,t)\nabla \xi)\leq\frac{1}{2(s-\alpha_1 t)^2}\left\{- \frac{\alpha_1}{4}\bar{\sigma}^2 \right\}<0
  \,
 \end{equation*}
 if the following two conditions are satisfied:
 \begin{equation*}
   0<\delta\leq \frac{\bar{\sigma}^2}{16b[c_1 +(b-1)^++c_0\varepsilon]}\qquad \mbox{ and } \qquad\alpha_1\geq \frac{40c_0b^2}{\bar{\sigma}^2}\,.
 \end{equation*}
 \end{itemize}

\medskip

Now consider the region
$$\Omega^{\varepsilon-\varepsilon_1}\setminus \overline{\Omega^{\varepsilon}}=\{x\in \Omega \;|\; \;\varepsilon-\varepsilon_1< d(x)\leq \varepsilon\}\,$$
For any  $x\in \Omega^{\varepsilon-\varepsilon_1}\setminus\Omega^{\varepsilon}, t\in (\tau-\delta, \tau)$, thanks to \eqref{estimate:2.1}, \eqref{laplace-d}, \eqref{lb-gradd},  \eqref{eqagg200} we have:
\begin{eqnarray*}
&& \partial_t\xi+\frac 52 a(x,t)|\nabla\xi|^2+{\rm{div}}(a(x,t)\nabla \xi)\\
&&=\frac1{2(s-\alpha_1 t)^2}\Big\{\zeta^2(-\alpha_1+5c_0\beta^2)\Big.\\
&&\Big.\qquad+2(s-\alpha_1t)\beta\Big[ \zeta \left(c_1 d^{\gamma+\beta-2}(x)+ c_0k_0 d^{\gamma+\beta-1}(x)\right)
-d^{2\beta-2+\gamma}(x)\beta\tilde c_0 \nu_0\Big]\Big\}\,.
\end{eqnarray*}
Observe that for any $x\in \Omega^{\varepsilon-\varepsilon_1}\setminus\overline{\Omega^{\varepsilon}}$,
\begin{equation}\label{eqagg222}
\zeta(x)\leq\varepsilon^{-\beta}- (\varepsilon-\varepsilon_1)^{-\beta}=\varepsilon^{-\beta}[1-(1-\ell)^{\beta}],
\end{equation}
while
\begin{equation}\label{eqagg233}
d(x)<\varepsilon\,.
\end{equation}
In view of \eqref{eqagg2}, \eqref{estimate:1}, \eqref{eqagg222}, \eqref{eqagg233}, we obtain
\begin{eqnarray*}
&&\partial_t\xi+\frac 52 a(x,t)|\nabla\xi|^2+{\rm{div}}(a(x,t)\nabla \xi)\\
&&\qquad\leq \frac{\beta\varepsilon^{2\beta-2+\gamma}}{s-\alpha_1 t}\left\{[(1-\ell)^{\beta}-1](c_1+c_0k_0)-\beta\nu_0\tilde c_0 \right\}<0,
\end{eqnarray*}
thanks to \eqref{eqagg30}.

\bigskip

 {\textbf{Claim 4:}} Condition (D2) holds.

 {\it Proof of Claim 4.} Using the properties of $\eta$ in \eqref{eta-0} and \eqref{eta-1} and the assumption on $a$ in \eqref{a3}, we have
 \begin{eqnarray*}
   \eta{\rm{div}} (a(x,t)\nabla\eta)+\frac 52|\nabla \eta|^2a(x,t)
   &\leq& c_1d^{\gamma-1}\frac{A_1}{\varepsilon}+c_0d^{\gamma}\frac{A_2}{\varepsilon^2}+\frac 52 \frac{A_1^2}{\varepsilon^2}c_0 d^{\gamma}\\
   &\leq& C_1  d^{\gamma-2}\,
 \end{eqnarray*}
 in $\Omega^{\frac{\varepsilon}{2}}\setminus\Omega^{\frac{2}{3}\varepsilon}$.

 Finally, inserting the estimates in (D1) and (D2) in \eqref{eq:3} we obtain
  \begin{equation*}
  \begin{array}{rlll}
  \int_{\Omeet}\psia(u(x,\tau))\eta^2(x)e^{\xi(x,\tau)}\, dx
  &=&\int_{\Omeet}\psia(u(x,\tau-\delta))\eta^2(x)e^{\xi(x,\tau-\delta)}\, dx &\\
  \\
  &+& C_1\int_{\tau-\delta}^{\tau}\int_{\Omega^{\frac{\varepsilon}{2}}\setminus\Omega^{\frac{2}{3}\varepsilon}}\psia(u(x,t)) e^{\xi(x,t)} d^{\gamma-2}(x)\, dx\, dt\,,
  \end{array}
 \end{equation*}
which coincides with \eqref{bound-prop2}.
\end{proof}

\begin{proof}[Proof of Proposition \ref{lem3}]
  Using the same arguments as in Proposition \ref{lem1} and the Lebesgue's dominated convergence theorem,
   from \eqref{bound-prop2}, we have
  \begin{equation}\label{h}
  \begin{array}{rlll}
  \int_{\Omeps}|u(x,\tau)|e^{\xi(x,\tau)}\, dx
  &\leq&\int_{\Omeet}|u(x,\tau-\delta)|e^{\xi(x,\tau-\delta)}\, dx &\\
  \\
  &+& C_1\int_{\tau-\delta}^{\tau}\int_{\Omega^{\frac{\varepsilon}{2}}\setminus\Omega^{\frac{2}{3}\varepsilon}}|u(x,t)| e^{\xi(x,t)} d^{\gamma-2}(x)\, dx\, dt\,.
  \end{array}
  \end{equation}
  By the definition of \eqref{zeta2}, $\xi=0$ in $\Omeps$ for any $t\in [\tau-\delta, \tau]$. Choose $s$ as in \eqref{eqagg355}. So,  $\xi(x,t)<0$ for all $x\in \Omega\setminus\overline{\Omeps}$ and $t\in [\tau-\delta, \tau]$, so $e^{\xi(x,t)}  \leq 1$.
  %
Therefore, from \eqref{h} we obtain
   \begin{equation*}
   \begin{array}{rlll}
   \int_{\Omeps}|u(x,\tau)|\, dx
   &\leq&\int_{\Omeet}|u(x,\tau-\delta)|\, dx &\\
   \\
   &+& C_1 \iint_{\Omeet\setminus \Omega^{\frac 23 \varepsilon}\times (\tau-\delta, \tau)}|u(x,t)| d^{\gamma-2}(x) \, dx\, dt\,.
   \end{array}
   \end{equation*}
   Finally we use the assumption \eqref{ub:exp-growth2} to get \eqref{est-lem}.

\end{proof}


\begin{proof}[Proof of Lemma \ref{lemma2bis}]
Take any $\e>0, \tau\in (0, T)$. Define
\[\e_k:=\frac{\e}{k^{\frac 1{\mu_1}}}\quad \textrm{for all}\;\; k\in \mathbb N\,.\]
Note that
\begin{equation}\label{e6b}
\sum_{k=1}^{+\infty} \e_k^{\mu_1}=+\infty\,,
\end{equation}
while, since $\mu_2>\mu_1$,
\begin{equation}\label{e6bb}
\sum_{k=1}^{+\infty} \frac{1}{k^{\frac{\mu_2}{\mu_1}}}=:S<+\infty\,.
\end{equation}

Furthermore, let $\{\delta_k\}_{k\in \mathbb N}\subset [0, \infty)$ and $\{\tau_k\}_{k\in \mathbb N}\subset [0, \tau]$ be two sequences
with $\{\tau_k\}$ defined inductively as follows
\[ \tau_1:=\tau,\]
\[\tau_{k+1}:=\tau_k-\delta_k\quad \textrm{for every}\;\; k\in \mathbb N, k\geq 2\,, \]
and
\begin{equation}\label{e5}
0\leq \delta_k\leq  \min\{\tau_k, C_1\e_k^{\mu_1}\} \quad \textrm{for all}\;\; k\in \mathbb N\,.
\end{equation}
Observe that
\begin{equation}\label{e6}
\tau-\tau_{k+1}=\delta_1+\delta_2+\ldots + \delta_k \quad \textrm{for every}\;\; k\in \mathbb N\,.
\end{equation}

We can choose $\{\tau_k\}$ so that there exists $k_0\in \mathbb N$ with $\tau_{k_0+1}=0$. In fact, in view of \eqref{e6}, $\tau_{k_0+1}=0$ if and only if
\begin{equation}\label{e11}
\tau=\delta_0+\delta_1+\ldots+\delta_{k_0}\,.
\end{equation}
Due to \eqref{e6b}, we can select the sequence $\{\delta_k\}$, and thus $\{\tau_k\}$, so that \eqref{e5} and \eqref{e11} hold, for some $k_0\in \mathbb N$.

\smallskip

From \eqref{e33} it follows that for every $k=1\ldots k_0$
\begin{equation}\label{e7}
\int_{\Omega^{\e_k}} u(x, \tau_k)\, \dd x \leq \int_{\Omega_{\e_{k+1}}} u(x, \tau_{k+1}) \dd x + C_2 \e_k^{\mu_2}\,.
\end{equation}
Since $\tau_{k_0+1}=0$,  thanks to \eqref{e8} we get
\begin{equation}\label{e9}
\int_{\Omega^{\e_{k_0+1}}} u(x, \tau_{k_0+1})\, \dd x = \int_{\Omega^{\e_{k_0+1}}} u(x, 0)\, \dd x = 0\,.
\end{equation}
By iterating \eqref{e7} up to $k=k_0$, in view of \eqref{e9}, we get
\begin{equation}\label{e10}
\int_{\Omega^{\e}} u(x, \tau) \dd x \leq \int_{\Omega^{\e_{k_0+1}}} u(x, \tau_{k_0+1}) + C_2 \sum_{k=1}^{k_0} \e_k^{\mu_2}
= C_2 \sum_{k=1}^{k_0} \e_k^{\mu_2}\,.
\end{equation}
Thanks to \eqref{e6bb},
\[\sum_{k=1}^{k_0} \e_k^{\mu_2} =  \sum_{k=1}^{k_0} \frac{\e^{\mu_2}}{k^{\frac{\mu_2}{\mu_1}}} \leq \e^{\mu_2} \sum_{k=1}^{+\infty} \frac 1{k^{\frac{\mu_2}{\mu_1}}}= S \e^{\mu_2}
\mathop{\longrightarrow}_{\e\to 0}0\,.\]
Hence, by letting $\e\to 0^+$ in \eqref{e10}, we obtain
\[\int_{\Omega} u(x, \tau) \dd x = 0\,. \]
Since $\tau\in (0, T)$ was arbitrary, the conclusion follows.
\end{proof}

%

\end{document}